\newcommand{\al}{\alpha}
\newcommand{\f}{\varphi}
\newcommand{\Ker}{\text{Ker}}
\newcommand\reallywidehat[1]{%
\savestack{\tmpbox}{\stretchto{%
  \scaleto{%
    \scalerel*[\widthof{\ensuremath{#1}}]{\kern-.6pt\bigwedge\kern-.6pt}%
    {\rule[-\textheight/2]{1ex}{\textheight}}%WIDTH-LIMITED BIG WEDGE
  }{\textheight}%
}{0.5ex}}%
\stackon[1pt]{#1}{\tmpbox}%
}
\newcommand{\SL}{\mathrm{SL}}
\numberwithin{equation}{section}
\def\eqref#1{(\ref{#1})}
\newcommand{\N}{{\mathbb N}}
\newcommand{\Z}{{\mathbb Z}}
\newcommand{\C}{{\mathbb C}}
\newcommand{\R}{{\mathbb R}}
\newcommand{\Q}{{\mathbb Q}}
\renewcommand{\H}{{\mathbb H}}
\def\1{\sqrt{-1}\:}
\newcommand{\cntrct}                % contraction with a vector field
{\hspace{2pt}\raisebox{1pt}{\text{$\lrcorner$}}\hspace{2pt}}
\newcommand{\arrow}{{\:\longrightarrow\:}}
\renewcommand{\bar}{\overline}
\renewcommand{\phi}{\varphi}
\renewcommand{\epsilon}{\varepsilon}
\renewcommand{\geq}{\geqslant}
\newcommand{\Aut}{\operatorname{Aut}}
\renewcommand{\dim}{\operatorname{dim}}
\newcommand{\rk}{\operatorname{rk}}
\newcommand{\Nov}{\operatorname{Nov}}
\renewcommand{\Re}{\operatorname{Re}}
\renewcommand{\Im}{\operatorname{Im}}
\newcommand{\vol}{\operatorname{vol}}
\newcounter{Mycounter}[section]
\newcounter{lemma}[section]
\newcounter{claim}[section]
\newcounter{sublemma}[section]
\newcounter{corollary}[section]
\newcounter{theorem}[section]
\newcounter{conjecture}[section]
\newcounter{proposition}[section]
\newcounter{definition}[section]
\newcounter{example}[section]
\newcounter{remark}[section]
\newcounter{problem}[section]
\newcounter{question}[section]
\def\blacksquare{\hbox{\vrule width 5pt height 5pt depth 0pt}}
\begin{document}

\newpage

\title[Morse-Novikov cohomology of closed one--forms]{Morse-Novikov cohomology of closed one--forms of rank 1}
\author{Alexandra Otiman}

\date{\today}
\address{Institute of Mathematics ``Simion Stoilow'' of the Romanian Academy\\
21, Calea Grivitei Street, 010702, Bucharest, Romania {\em and}\newline University of Bucharest, Faculty of Mathematics and Computer Science, 14 Academiei Str., Bucharest, Romania.}
\email{alexandra\_otiman@yahoo.com}

\abstract  We discuss the Morse-Novikov cohomology of a compact manifold, associated to a closed one--form whose free abelian group generated by its periods $\langle \int_\gamma \eta \mid [\gamma] \in \pi_1(M)\rangle$ is of rank 1, the focus being on locally conformally symplectic manifolds. In particular, we provide an explicit computation for the Inoue surface $\mathcal{S}^0$. \\[.1in]
%For a closed one-form $\eta$ on a manifold $M$, we define the rank to be the rank of the free abelian group generated by its periods $\langle \int_\gamma \eta \mid [\gamma] \in \pi_1(M)\rangle$. We are interested in studying the Morse-Novikov cohomology associated to $\eta$, when $\eta$ is of rank 1. We relate the Morse-Novikov cohomology with the Novikov Betti numbers and provide an explicit computation for the Inoue surface $\mathcal{S}^0$. \\[.1in]

\noindent{\bf Keywords:}  Morse-Novikov cohomology, Novikov ring, Novikov Betti numbers, locally comformally symplectic, Inoue surface $\mathcal{S}^0$.\\
\noindent{\bf 2010 MSC: 53D05, 53D20, 53C55}
\endabstract

\maketitle

\tableofcontents

\section{Introduction}

The Morse-Novikov cohomology of a manifold $M$ refers to the cohomology of the complex of smooth real forms $\Omega^{\bullet}(M)$, with the differential operator perturbed with a closed one-form $\eta$, defined as follows
\begin{equation}
d_{\eta} := d - \eta \wedge \cdot
\end{equation}
Indeed, the closedness of $\eta$ implies $d_{\eta}^2=0$, whence $d_{\eta}$ produces a cohomology, which we denote by $H^{\bullet}_\eta(M)$. Throughout this paper, we shall use the name Morse-Novikov for the cohomology $H^{\bullet}_\eta(M)$, although the name Lichnerowicz cohomology is also used in the literature (see \cite{bk}, \cite{hr}). Its study began with Novikov (\cite{n1}, \cite{n2}) and was independently developed by Guedira and Lichnerowicz (\cite{gl}). 

The Morse-Novikov cohomology has more than one description.
To begin with, let us look at the following exact sequence of sheafs:
\begin{equation}\label{rezolutie}
0 \rightarrow \Ker\, d_\eta \xrightarrow{i}  \Omega^0_{M}( \cdot ) \xrightarrow{d_{\eta}} \Omega^1_{M}( \cdot )\xrightarrow{d_\eta}\Omega^2_{M}( \cdot )\xrightarrow{d_\eta} \cdots
\end{equation}
where we denote by $\Omega^k_{M}( \cdot )$ the sheaf of smooth real $k$-forms on $M$. In fact, the sequence above is an acyclic resolution for $\Ker\, d_\eta$, as each $ \Omega^i_{M}( \cdot )$ is soft (see Proposition 2.1.6 and Theorem 2.1.9 in \cite{d}). Thus, by taking global sections in \eqref{rezolutie}, we compute the cohomology groups of $M$ with values in the sheaf $\Ker\, d_\eta$, $H^i(M, \Ker\, d_\eta)$. What we obtain is actually the Morse-Novikov cohomology.

The sheaf $\Ker\, d_\eta$ has the property that there exists a covering $(U_i)_i$ of $M$, such that it is constant when restricted to each $U_i$. In order to see that, one simply takes a contractible covering $(U_i)_i$ for which $\eta=df_i{_{|U_i}}$, then by considering the map $g \mapsto e^{-f_i}g$, we get an isomorphism $\Ker\, d_{\eta}(U_i) \simeq \R$.

Moreover, the covering $(U_i)_i$ and the isomorphisms above associate to $\Ker\, d_\eta$ a line bundle $L$, which is trivial on this covering and whose transition maps are $g_{ij}=e^{f_i -f_j}$. It is immediate that $(U_i, e^{-f_i})$ defines a global nowhere vanishing section $s$ of $L^*$, which is the dual of $L$ and by means of $s$, $L^*$ is isomorphic to the trivial bundle. We define a flat connection $\nabla$ on $L^*$ by $\nabla s= - \eta \otimes s$. Then $H^i_\eta(M)$ can also be computed as the cohomology of the following complex of forms with values in $L^*$: 
\begin{equation}
0  \rightarrow  \Omega^0(M, L^* ) \xrightarrow{\nabla}  \Omega^1(M, L^* )\xrightarrow{\nabla}  \Omega^2(M, L^* ) \xrightarrow{\nabla} \cdots
\end{equation}

The Morse-Novikov cohomology is not a topological object in essence, however it can provide information about the closed one-form to which it is associated. For instance, it was shown in \cite{llmp} that if on a compact manifold $M$ there exists a Riemannian metric $g$ and a closed one-form $\eta$ such that $\eta$ is parallel with respect to $g$, then for any $i \geq 0$, $H^i_\eta(M)=0$.

Some properties verified by the Morse-Novikov cohomology are summarized in the following:
\begin{proposition} \label{prop1} Let $M$ be a $n$-dimensional manifold and $\eta$ a closed one-form. Then
\begin{enumerate} 
\item if $\eta'= \eta+ df$, for any $i \geq 0$, $H^i_{\eta'}(M) \simeq H^i_{\eta}(M)$ and the isomorphism is given by the map $[\alpha] \mapsto [e^{-f}\alpha]$.
\item (\cite{hr}, \cite{gl}) if $\eta$ is not exact and $M$ is connected and orientable, $H^0_\eta(M)$ and $H^n_\eta(M)$ vanish.
\item (\cite{bk}) the Euler characteristic of the Morse-Novikov cohomology coincides with the Euler characteristic of the manifold, as a consequence of the Atyiah-Singer index theorem, which implies that the index of the elliptic complex $(\Omega^k(M), d_\eta)$ is independent of $\eta$.
\end{enumerate}
\end{proposition}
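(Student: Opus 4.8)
The plan is to establish each of the three parts separately, since they rely on different mechanisms.

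\textbf{Part (1).} The idea is purely algebraic: the map $\alpha \mapsto e^{-f}\alpha$ is an isomorphism of graded modules $\Omega^\bullet(M) \to \Omega^\bullet(M)$, and I would verify it intertwines $d_\eta$ with $d_{\eta'}$. Concretely, a direct computation gives $d_{\eta'}(e^{-f}\alpha) = d(e^{-f}\alpha) - (\eta + df)\wedge e^{-f}\alpha = e^{-f}(d\alpha - df\wedge\alpha) - e^{-f}(\eta + df)\wedge\alpha = e^{-f}(d\alpha - \eta\wedge\alpha) = e^{-f} d_\eta \alpha$. Hence this multiplication operator is an isomorphism of cochain complexes $(\Omega^\bullet(M), d_\eta) \to (\Omega^\bullet(M), d_{\eta'})$, inducing the claimed isomorphism on cohomology. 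No obstacle here; it is a one-line verification.

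\textbf{Part (2).} For $H^0_\eta(M)$: an element is a smooth function $g$ with $dg = \eta g$. If $g$ vanishes somewhere, then on the open set where $g \neq 0$ we have $\eta = d\log|g|$, but more cleanly one argues $g$ cannot change sign, and $\eta = d\log g$ on $\{g\neq 0\}$; since $M$ is connected, if $g$ is not identically zero its zero set is both open (from the ODE-type equation $dg = g\eta$, a point where $g$ vanishes together with its differential forces $g\equiv 0$ along paths) and closed, so $g$ is nowhere zero, whence $\eta = d\log|g|$ globally, contradicting non-exactness of $\eta$. For $H^n_\eta(M)$: use Poincar\'e-type duality. Pairing $H^k_\eta(M)$ with $H^{n-k}_{-\eta}(M)$ via $([\alpha],[\beta]) \mapsto \int_M \alpha\wedge\beta$ is well defined because $d(\alpha\wedge\beta) = d_\eta\alpha\wedge\beta \pm \alpha\wedge d_{-\eta}\beta$, and on a compact orientable manifold this pairing is nondegenerate (this is the analogue of Poincar\'e duality for the local system $L^*$, which one may invoke via the $L^*$-valued description in the excerpt, or reprove by the usual Hodge-theoretic argument since $d_\eta$ is still an elliptic complex). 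Therefore $H^n_\eta(M) \cong H^0_{-\eta}(M)^*$, and the latter vanishes by the $H^0$ argument applied to $-\eta$, which is non-exact iff $\eta$ is.

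\textbf{Part (3).} This is the quickest: the complex $(\Omega^\bullet(M), d_\eta)$ is an elliptic complex because its symbol sequence agrees with that of the de Rham complex (the zeroth-order term $\eta\wedge\cdot$ does not affect the principal symbol). By the Atiyah--Singer index theorem the index of an elliptic complex depends only on the symbol, hence is the same for all $\eta$; taking $\eta = 0$ identifies it with the Euler characteristic $\chi(M)$. Since all $H^i_\eta(M)$ are finite-dimensional (ellipticity on a compact manifold), the index equals $\sum_i (-1)^i \dim H^i_\eta(M)$, which is therefore $\chi(M)$. The main obstacle among the three parts is Part (2), specifically setting up the duality pairing cleanly; but since the excerpt already provides the flat line bundle $L^*$ with connection $\nabla$, I would phrase it as Poincar\'e duality with coefficients in the flat bundle $L^*$ (whose dual flat bundle corresponds to $-\eta$), which makes the nondegeneracy a standard fact.
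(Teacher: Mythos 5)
The paper itself gives no proof of this proposition (it is stated as a list of known facts with citations to \cite{hr}, \cite{gl}, \cite{bk}), so your argument can only be judged on its own merits. Parts (2) and (3) are fine: the $H^0$ argument via the linear ODE along paths and connectedness is the standard one, the duality pairing $H^k_\eta\times H^{n-k}_{-\eta}\to\R$ is correctly set up (the signs in $d(\alpha\wedge\beta)=d_\eta\alpha\wedge\beta+(-1)^k\alpha\wedge d_{-\eta}\beta$ do work out with the convention $d_\eta=d-\eta\wedge\cdot$), and the Atiyah--Singer argument for (3) is exactly the one the statement itself alludes to. You do implicitly use compactness (for nondegeneracy of the pairing and for Atiyah--Singer), which is the paper's standing assumption, so that is acceptable.

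There is, however, a genuine error in the displayed computation of Part (1): the equality $e^{-f}(d\alpha-df\wedge\alpha)-e^{-f}(\eta+df)\wedge\alpha=e^{-f}(d\alpha-\eta\wedge\alpha)$ is false. The two terms $-e^{-f}\,df\wedge\alpha$ (one from the Leibniz rule, one from $\eta'=\eta+df$) have the \emph{same} sign, so they add to $-2e^{-f}\,df\wedge\alpha$ rather than cancel; what you actually get is $d_{\eta'}(e^{-f}\alpha)=e^{-f}d_\eta\alpha-2e^{-f}\,df\wedge\alpha$. The correct identities are $d_{\eta'}(e^{f}\alpha)=e^{f}d_\eta\alpha$ and, equivalently, $d_{\eta}(e^{-f}\alpha)=e^{-f}d_{\eta'}\alpha$: multiplication by $e^{-f}$ is a chain isomorphism from $(\Omega^\bullet(M),d_{\eta'})$ to $(\Omega^\bullet(M),d_{\eta})$, not the other way around as you assert. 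This is consistent with the proposition's own phrasing, where $[\alpha]\mapsto[e^{-f}\alpha]$ is the map $H^i_{\eta'}(M)\to H^i_\eta(M)$. The overall strategy (conjugation by an exponential intertwines the two twisted differentials) is of course right, and the conclusion survives, but as written the key verification does not hold and the direction of the chain map is reversed.
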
 
From now on, we shall assume throughout the paper that $M$ is a compact manifold, unless specified. 
We denote by $\chi: \pi_1(M) \rightarrow \R$ the morphism of periods of $\eta$, namely $[\gamma] \mapsto \int_{\gamma} \eta$. 
\begin{definition} The {\em rank} of $\eta$ is the rank of $\Im \chi$ as a free abelian group.
\end{definition}

The fundamental group of $M$ is finitely presented, hence $\Im \chi$ has finite rank and it is isomorphic to a free abelian group $\Z^r$.
The following characterization was proven in  \cite{ov}:

\begin{proposition}  The {\em rank} of a closed one-form $\eta$ is the dimension of the smallest rational subspace $V \subset H^1(M, \Q)$ such that $[\eta]$ lies in $V \otimes_{\Q} \R$.
\end{proposition}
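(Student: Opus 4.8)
The plan is to translate everything into linear algebra over a single lattice. Since $\eta$ is closed, the period morphism $\chi\colon\pi_1(M)\to\R$, $[\gamma]\mapsto\int_\gamma\eta$, depends only on homology classes and takes values in the torsion-free group $\R$, hence factors through the finitely generated free abelian group $L:=H_1(M,\Z)/\mathrm{torsion}$, giving $\bar\chi\colon L\to\R$. Under the universal coefficient identifications $H^1(M,\Q)\cong\Hom(L,\Q)$ and $H^1(M,\R)\cong\Hom(L,\R)=\Hom(L,\Q)\otimes_\Q\R$ (both $\Ext$ terms vanish because $H_0$ is free), the de Rham theorem identifies $[\eta]$ precisely with $\bar\chi$. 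I also note that the minimal rational subspace in the statement is well defined: if $V_1,V_2\subset H^1(M,\Q)$ are rational subspaces, then $(V_1\cap V_2)\otimes_\Q\R=(V_1\otimes_\Q\R)\cap(V_2\otimes_\Q\R)$ by flatness of $\R$ over $\Q$, so the family of rational subspaces whose complexification contains $[\eta]$ is stable under intersection, and a member of minimal dimension is contained in all the others. Throughout I will use the elementary fact that a finitely generated subgroup $G$ of a $\Q$-vector space satisfies $\rk_\Z G=\dim_\Q\langle G\rangle_\Q$, since a finite subset of $G$ is $\Z$-linearly independent if and only if it is $\Q$-linearly independent.

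Next I would prove $\dim V\leq\rk\eta$ by constructing an explicit rational subspace of the right dimension. Set $r:=\rk\eta=\rk_\Z(\Im\chi)$ and let $W:=\langle\,\bar\chi(x)\mid x\in L\,\rangle_\Q\subset\R$, the $\Q$-span of the finitely generated group $\Im\chi$; by the fact above, $\dim_\Q W=r$. Choose a $\Q$-basis $t_1,\dots,t_r$ of $W$ with coordinate functionals $\pi_j\colon W\to\Q$, $\pi_j(t_k)=\delta_{jk}$, and put $\la_j:=\pi_j\circ\bar\chi\in\Hom(L,\Q)=H^1(M,\Q)$. For every $x\in L$ one has $\bar\chi(x)\in W$, hence $\bar\chi(x)=\sum_{j=1}^r\pi_j(\bar\chi(x))\,t_j=\sum_{j=1}^r\la_j(x)\,t_j$, so that $[\eta]=\bar\chi=\sum_{j=1}^r t_j\,\la_j$ in $\Hom(L,\R)$. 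Therefore $[\eta]$ lies in $V_0\otimes_\Q\R$ for $V_0:=\mathrm{span}_\Q\{\la_1,\dots,\la_r\}$, and the minimal such subspace satisfies $\dim V\leq\dim V_0\leq r$.

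For the reverse inequality, let $V\subset H^1(M,\Q)$ be any rational subspace with $[\eta]\in V\otimes_\Q\R$, and fix a $\Q$-basis $\mu_1,\dots,\mu_s$ of $V$, where $s=\dim V$. Since $\{\mu_k\otimes 1\}$ is an $\R$-basis of $V\otimes_\Q\R$, there are (unique) $u_1,\dots,u_s\in\R$ with $\bar\chi=\sum_{k=1}^s u_k\mu_k$ in $\Hom(L,\R)$. Evaluating on an arbitrary $x\in L$ gives $\bar\chi(x)=\sum_{k=1}^s u_k\,\mu_k(x)$ with $\mu_k(x)\in\Q$, so $\Im\chi\subset\mathrm{span}_\Q\{u_1,\dots,u_s\}$, a $\Q$-vector space of dimension at most $s$. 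By the fact above, $\rk\eta=\rk_\Z(\Im\chi)\leq s=\dim V$. Combined with the previous step this gives $\rk\eta=\dim V$.

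The argument is mostly bookkeeping, and the one point requiring genuine care — and hence the main obstacle — is verifying that $\chi$ really does correspond to $[\eta]$ under the universal coefficient and de Rham identifications, so that the two notions of rank being compared live on the same object; everything else reduces to the coordinate-functional construction of the minimal subspace and the rank–dimension dictionary for finitely generated subgroups of $\Q$-vector spaces, which is what powers both inequalities.
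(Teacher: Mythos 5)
Your argument is correct and complete. Note, however, that the paper itself gives no proof of this proposition --- it is quoted as a result established in the reference [OV] --- so there is no in-text argument to compare against. What you supply is the standard self-contained verification: identify $[\eta]$ with the induced homomorphism $\bar\chi\in\Hom(H_1(M,\Z)/\mathrm{tors},\R)$ via de Rham and universal coefficients, and then play off the $\Z$-rank of $\Im\chi$ against the $\Q$-dimension of its rational span. Both inequalities are handled cleanly: the coordinate-functional construction $\la_j=\pi_j\circ\bar\chi$ produces an explicit rational subspace of dimension at most $r$ containing $[\eta]$ after extension of scalars, and conversely writing $\bar\chi=\sum u_k\mu_k$ over a rational basis traps $\Im\chi$ in a $\Q$-space of dimension at most $\dim V$. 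The two points that genuinely needed care --- that the minimal rational subspace exists (stability of the family under intersection, using $(V_1\cap V_2)\otimes_\Q\R=(V_1\otimes_\Q\R)\cap(V_2\otimes_\Q\R)$), and that $[\eta]$ really is $\bar\chi$ under the identifications --- are both addressed. This matches the spirit of the argument in [OV] and, if anything, is a welcome expansion of a step the paper leaves entirely to the literature.
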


In other words, the rank of $\eta$ is the maximum number of rationally independent periods.

\begin{remark}
If $\eta$ is a closed one-form of rank 1, then $\Im \chi = \alpha \cdot \Z$, where $\alpha$ is a real number. 
\end{remark}

Motivated by the natural setting that locally conformally symplectic manifolds provide for the Morse-Novikov cohomology, the aim of this note is to present some explicit examples and computations, of particular interest being the Inoue surface $\mathcal{S}^0$. 

%\begin{theorem} (Lemma 2, \cite{paj})\label{mail} Let $M$ be a manifold and $\eta$ a closed one-form of rank 1, such that $\Im \chi = \alpha \Z$, with $e^\alpha$ not an algebraic number. Then for any $i \geq 0$:
%$$b^{Nov}_i = \dim_{\R}H^i_{\eta}(M).$$ 
%\end{theorem}

%This theorem relates the Morse-Novikov cohomology with the so-called Novikov Betti numbers.

The note is organized as follows. As we try to present this material as self-contained as possible, we give the necessary preliminaries in Section 2 and explain the tools we shall use in the sequel, namely a result of A. Pajitnov in \cite{paj}, which relates the so-called  {\em Novikov Betti numbers} to the Morse-Novikov cohomology and a twisted version of the Mayer-Vietoris sequence of S. Haller and T. Rybicki presented in \cite{hr}. Section 3 is devoted to introducing locally conformally symplectic manifolds and to computing the Morse-Novikov cohomology of the Inoue surface $\mathcal{S}^0$ with respect to the closed one-form that F.Tricerri proves in \cite{t} to be the Lee form of a locally conformally K\"ahler form.

%we focus on homologies and cohomologies with values in a local system, also called with twisted (or local) coefficients.  We shall recall some basic theoretical facts regarding twisted coefficients, along with the techniques developed by Novikov, involving Morse theory for computing the homology of a compact manifold with values in some particular local systems. We shall define the Novikov ring and the Novikov Betti numbers $b^{Nov}_i$. We try to present this known material as self-contained as possible and in the form suited for our result.

%Section 3 is devoted to the proof of our main result:
%\begin{theorem}\label{mail} Let $M$ be a manifold and $\eta$ a closed one-form of rank 1, such that $\Im \chi = \alpha \Z$, with $e^\alpha$ not an algebraic number. Then for any $i \geq 0$:
%$$b^{Nov}_i = \dim_{\R}H^i_{\eta}(M).$$
%\end{theorem}

%As a consequence, we obtain the following vanishing result:
%\begin{corollary}\label{cor_main} If $M$ is compact and $\eta$ is a nowhere vanishing form which defines a rational cohomology class, then for all $i \geq 0$, $H^i_{\eta}(M)$ vanishes.
%\end{corollary}

%In Section 4, we discuss some consequences of \ref{mail} for locally conformally symplectic manifolds. Furthermore, we compute the Morse-Novikov cohomology of the Inoue surface $\mathcal{S}^0$ with respect to the closed one-form that Tricerri proves in \cite{t} to be the Lee form of a locally conformally K\"ahler form.

\section {Preliminaries}

We first give some definitions, in order to state later the results we are going to use.

\begin{definition} Let $\Gamma \subset \R$ be a subgroup of $\R$. The {\em Novikov ring} associated to $\Gamma$ is defined as the following ring of formal sums:
$$\Nov(\Gamma)=\left\{\sum_{i=1}^{\infty}n_iT^{\gamma_i} \mid n_i \in \Z, \gamma_i \in \Gamma, \lim_{i \rightarrow \infty}\gamma_i=-\infty\right\}$$
\end{definition}

\begin{remark} The ring $Nov(\Gamma)$ is a principal ideal domain (see \cite[Lemma 1.15]{f}).
\end{remark}

In what follows, we consider $\Gamma$ to be the group of periods of $\eta$. In this case, there is a $\Z[\pi_1(M)]$-module structure of $\Nov(\Gamma)$ described by $[\gamma] \cdot n = T^{-\int_{\gamma}\eta} \cdot n$, for any element $n$ in $\Nov(\Gamma)$. This further describes a $\Nov(\Gamma)$-local system on the manifold $M$, which we denote by $\widetilde{\Nov(\Gamma)}$. We recall that if $R$ is a commutative ring, there is a correspondence between representations of the fundamental group $\rho : \pi_1(M) \rightarrow \Aut(R)$, $\Z[\pi_1(M)]$-module structures on $R$ and $R$-local systems.  For more details, see Proposition 2.5.1 and Chapter 2 in \cite{d}. 

\begin{definition} For any $i \in \N$, the $i$-th {\em Novikov Betti number} is
$$b_i^{Nov}(M) := \rk_{\Nov(\Gamma)} H_{i}(M,  \widetilde{\Nov(\Gamma)}).$$
\end{definition}

\begin{remark} As $M$ is compact,  $H_{i}(M,  \widetilde{\Nov(\Gamma)})$ is a finitely generated module over a principal ideal domain and hence it decomposes as a direct sum of a free and a torsion part. We mean by $\rk_{\Nov(\Gamma)} H_{i}(M,  \widetilde{\Nov(\Gamma)})$ the rank of the free part of $H_{i}(M,  \widetilde{\Nov(\Gamma)})$.
\end{remark}

The relation between the Morse-Novikov cohomology and the Novikov Betti numbers is given by the following result of A. Pajitnov, which we state in the form we shall need:

\begin{theorem}\label{mail}(\cite[Lemma 2]{paj}) Let $M$ be a manifold and $\eta$ a closed one-form of rank 1, such that $\Im \chi = \alpha \Z$, with $e^\alpha$ transcendental. Then for any $i \geq 0$:
$$b^{Nov}_i = \dim_{\R}H^i_{\eta}(M).$$
\end{theorem}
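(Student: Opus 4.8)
The plan is to compare the two chain complexes that compute, respectively, the Morse--Novikov cohomology $H^\bullet_\eta(M)$ and the Novikov homology $H_\bullet(M,\widetilde{\Nov(\Gamma)})$, and to deduce the equality of dimensions/ranks from a ``universal coefficients'' type argument once we base-change to a common field. Fix a smooth triangulation (or a CW structure) of the compact manifold $M$ and let $\widetilde M \to M$ be the maximal free abelian cover determined by $\chi$, so that $\pi := \im\chi \cong \Z$ acts on $\widetilde M$ and $C_\bullet(\widetilde M)$ is a complex of finitely generated free $\Z[\pi] \cong \Z[t,t^{-1}]$-modules. The Novikov homology is by definition $H_\bullet\big(\Nov(\Gamma)\otimes_{\Z[t,t^{-1}]} C_\bullet(\widetilde M)\big)$, and since $\Nov(\Gamma)$ is a PID (the Remark after Definition~2.1), $b_i^{\Nov}$ is the rank of the free part of this module. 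On the other hand, by Proposition~\ref{prop1}(1) the class $[\eta]\in H^1(M,\R)$ determines $H^\bullet_\eta(M)$ up to isomorphism, so we may compute it using the flat line bundle $L^*$ with holonomy $\gamma\mapsto e^{\int_\gamma\eta}$; de Rham's theorem with local coefficients identifies $H^\bullet_\eta(M)$ with the cohomology $H^\bullet\big(\Hom_{\Z[t,t^{-1}]}(C_\bullet(\widetilde M),\R)\big)$, where $\R$ is the $\Z[t,t^{-1}]$-module on which $t$ acts by multiplication by $e^{\alpha}$ (here $\Im\chi=\alpha\Z$).

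The next step is to identify both sides with invariants of the single complex $C_\bullet := C_\bullet(\widetilde M)$ of finitely generated free $R$-modules, $R := \Z[t,t^{-1}]$, after inverting the hypothesis $e^\alpha$ transcendental. Consider the ring map $R \to \R$, $t\mapsto e^\alpha$; transcendence of $e^\alpha$ means this map is \emph{injective}, i.e.\ it identifies $R$ with the subring $\Z[e^\alpha,e^{-\alpha}]\subset\R$, and it extends to an embedding of the fraction field $Q(R)=\Q(t)$ into $\R$. The de Rham side is then $H^\bullet(\Hom_R(C_\bullet,\R))$ and, because $\R$ is a \emph{flat} $R$-module (it is torsion-free over the PID-localizations of $R$, and more simply because $Q(R)\hookrightarrow\R$ makes $\R$ a $Q(R)$-vector space, hence flat over $R$ after inverting nothing is needed — flatness over $R$ follows as $\R$ is torsion-free over the Dedekind-like $R$... ) one gets
\begin{equation}
H^i_\eta(M)\;\cong\;H^i\big(\Hom_R(C_\bullet,R)\big)\otimes_R \R .
\end{equation}
Similarly the Novikov side is $H_i(C_\bullet\otimes_R \Nov(\Gamma))$; since $\Nov(\Gamma)$ is flat over $R$ (it is a localization-completion that is in particular torsion-free, and $R$ is a PID after we restrict to the relevant one-variable Laurent setting), $b_i^{\Nov}=\rk_{\Nov(\Gamma)}\big(H_i(C_\bullet)\otimes_R\Nov(\Gamma)\big)$. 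Now decompose $H_\bullet(C_\bullet)$ and $H^\bullet(\Hom_R(C_\bullet,R))$ over the PID $R$ into free parts plus torsion; the free ranks of $H_i(C_\bullet)$ and of $H^i(\Hom_R(C_\bullet,R))$ agree (finite free complex over a PID), call it $\beta_i$. Tensoring with the field $Q(R)$ kills all torsion, so both $\dim_{Q(R)}\big(H^i\otimes Q(R)\big)$ and $\rk_{\Nov}\big(H_i\otimes\Nov(\Gamma)\big)$ equal $\beta_i$: for the Novikov side because $\Nov(\Gamma)$-torsion can only come from $R$-torsion and the free rank is preserved; for the de Rham side because $\R\supset Q(R)$ and tensoring the finitely generated $R$-module $H^i(\Hom_R(C_\bullet,R))$ up to $\R$ yields an $\R$-vector space of dimension equal to its $Q(R)$-rank (torsion modules $R/(p)$ tensor to $\R/(p(e^\alpha))=0$ precisely because $p(e^\alpha)\neq 0$, which is exactly transcendence of $e^\alpha$). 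Hence $\dim_\R H^i_\eta(M)=\beta_i=b_i^{\Nov}(M)$.

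The main obstacle — and the place where the transcendence hypothesis is genuinely used and must be handled carefully — is precisely the vanishing $(R/(p))\otimes_R\R=0$ for every nonzero $p\in R$: if $e^\alpha$ were algebraic, some cyclotomic-type or minimal-polynomial factor $p$ would have $p(e^\alpha)=0$ and the corresponding torsion in $H^i(\Hom_R(C_\bullet,R))$ would survive in the de Rham cohomology, breaking the equality (this is why Pajitnov must exclude it). The second, more technical point to nail down is the chain-level identification of $H^\bullet_\eta(M)$ with $H^\bullet(\Hom_R(C_\bullet(\widetilde M),\R))$: one invokes that $(\Omega^\bullet(M,L^*),\nabla)$ is a fine resolution computing cohomology with coefficients in the local system associated to the representation $t\mapsto e^\alpha$, and that this local-system cohomology is computed simplicially by $\Hom_{\Z[\pi]}(C_\bullet(\widetilde M),\R_{e^\alpha})$; this is standard (de Rham with local coefficients), and Proposition~\ref{prop1}(1) guarantees independence of the choice of representative $\eta$ within its cohomology class so that the period morphism $\chi$ is all that matters. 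Assembling these ingredients gives the theorem.
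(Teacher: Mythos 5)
The paper itself does not prove this statement---it is quoted from Pajitnov \cite{paj}---so your proposal has to stand on its own. Your overall strategy is the standard one and is essentially sound: pass to the infinite cyclic cover $\widetilde M$ associated to $\Ker\,\chi$ (deck group $\Im\chi\cong\Z$, since $\eta$ has rank $1$), regard $C_\bullet(\widetilde M)$ as a finite complex of finitely generated free modules over $R=\Z[t,t^{-1}]$, identify $H^\bullet_\eta(M)$ with the cohomology of $\Hom_R(C_\bullet,\R)$ for $t$ acting by $e^{\pm\alpha}$ and the Novikov homology with $H_\bullet(C_\bullet\otimes_R\Nov(\Gamma))$, and compare both to a common ``generic'' Betti number over the fraction field $\Q(t)$ of $R$ using flatness. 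The transcendence of $e^\alpha$ enters exactly where you say it does: it makes $R\to\R$, $t\mapsto e^{\alpha}$, injective, hence factors it through $\Q(t)\hookrightarrow\R$, which yields flatness of $\R$ over $R$ and the vanishing of everything torsion after base change.

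There is, however, a genuine error in the write-up that you must repair: $R=\Z[t,t^{-1}]$ is \emph{not} a PID (it has Krull dimension $2$; the ideal $(2,\,t-1)$ is not principal). Consequently the decomposition of $H_i(C_\bullet)$ and $H^i(\Hom_R(C_\bullet,R))$ into ``free part plus torsion'', the claim that torsion modules have the form $R/(p)$, and the appeal to a ``finite free complex over a PID'' are all unjustified as stated; the ring that \emph{is} a PID here is $\Nov(\Gamma)$ (Farber's lemma, quoted in the paper), and that property does not transfer back to $R$. The fix is to avoid structure theory over $R$ entirely. Set $\beta_i:=\dim_{\Q(t)}H_i(C_\bullet\otimes_R\Q(t))$. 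On the Novikov side, flatness of $\Nov(\Gamma)$ over $R$ and the inclusion $\Q(t)\subset\operatorname{Frac}(\Nov(\Gamma))$ give $\rk_{\Nov(\Gamma)}H_i(C_\bullet\otimes_R\Nov(\Gamma))=\dim_{\operatorname{Frac}(\Nov(\Gamma))}\bigl(H_i(C_\bullet)\otimes_R\operatorname{Frac}(\Nov(\Gamma))\bigr)=\beta_i$. On the de Rham side, flatness of $\R$ over $R$ gives $\dim_\R H^i_\eta(M)=\dim_{\Q(t)}H^i\bigl(\Hom_{\Q(t)}(C_\bullet\otimes_R\Q(t),\Q(t))\bigr)$, which equals $\beta_i$ by duality over the field $\Q(t)$. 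With this replacement the argument closes; the half-finished flatness justifications (``Dedekind-like $R$\dots'') should be deleted in favour of the clean chain $R\hookrightarrow\Q(t)\hookrightarrow\R$, and the flatness of $\Nov(\Gamma)$ over $R$ should be stated as the standard fact that $\Nov(\alpha\Z)\cong\Z((s))=\Z[[s]][s^{-1}]$ is a localization of the completion of the Noetherian ring $\Z[s]$.
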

The importance of the Novikov Betti numbers is of topological nature and reffers to the extension of the classical Morse theory to closed one-forms developed by Novikov. His initial motivation was to find a relation between the zeros of a closed one-form $\eta$, {\em of Morse type} (namely, locally given by $\eta=df$, where $f$ is a Morse function) and the topology of the manifold. The tool he created is a complex $(N^k_{\eta*}, \delta_k)$, where $N^k_{\eta*}$ is a free $Nov(\Gamma)$-module, whose generators are in 1-1 correspondence with the zeros of index $k$ of $\eta$. For the differentials $\delta_k$, one needs to consider the cover $M_{\eta} \xrightarrow{\pi} M$ corresponding to the group $\Ker\, \chi$, which is the minimal cover on which $\pi^*\eta$ is exact, and count down flow lines of a Smale vector field between critical points of consecutive index of the primitive of $\pi^*\eta$ (which turns out to be a Morse function). For more details regarding this construction, see \cite{f}, \cite{l} and \cite{poz}.

The following result was stated by Novikov, \cite{n2}, but proven rigorously by F. Latour in \cite{l} and M. Farber \cite{f2}. 

\begin{theorem}\label{novikov} Let $M$ be a compact manifold, $\eta$ a closed one-form of Morse type and $(N_{\eta*}, \delta_*)$ the Novikov complex associated to $\eta$. Then:
$$H_{i}(N_{\eta*}, \delta_*) \simeq H_{i}(M,  \widetilde{\Nov(\Gamma)}).$$
\end{theorem}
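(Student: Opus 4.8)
The plan is to realize both sides as the homology of explicit chain complexes of $\Nov(\Gamma)$-modules attached to one and the same handle decomposition of the minimal cover, and then to match them. Recall $\pi\colon M_\eta\to M$ is the regular $\Gamma$-cover associated to $\Ker\,\chi$, so that $\pi^*\eta=d\tilde f$ for a globally defined $\tilde f\colon M_\eta\to\R$ with $\tilde f(g\cdot x)=\tilde f(x)+\chi(g)$ for $g\in\Gamma$ (viewing $\Gamma=\Im\chi\subset\R$). Since $\eta$ is of Morse type, $\tilde f$ is a Morse function, and since $M=M_\eta/\Gamma$ is compact, $\tilde f$ is \emph{proper}: every $\tilde f^{-1}([a,b])$ is compact. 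First I would record the identification of the right-hand side: by the dictionary between $\Nov(\Gamma)$-local systems and $\Z[\pi_1(M)]$-modules (Chapter 2 of \cite{d}), and because the $\pi_1(M)$-action on $\Nov(\Gamma)$ factors through $\chi$, one has $H_i(M,\widetilde{\Nov(\Gamma)})\cong H_i\big(\Nov(\Gamma)\otimes_{\Z[\Gamma]}C_*(M_\eta)\big)$, where $C_*(M_\eta)$ is the singular (or any free equivariant CW) chain complex of $M_\eta$, a complex of free $\Z[\Gamma]$-modules, and the tensor uses the structure $g\cdot n=T^{-\chi(g)}n$.

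Next I would build a $\Gamma$-equivariant handle/CW structure on $M_\eta$ out of $\tilde f$. Using properness of $\tilde f$ and the gradient flow of a $\Gamma$-invariant Riemannian metric, classical Morse theory (Milnor, adapted to the cover) shows that $M_\eta$ is $\Gamma$-equivariantly homotopy equivalent to a CW complex with one $\Gamma$-orbit of $k$-cells for each zero of $\eta$ of index $k$; the associated cellular complex $\mathcal C_*$ is free over $\Z[\Gamma]$ with a preferred basis in bijection with the zeros of $\eta$. After a small $\Gamma$-equivariant perturbation of the metric, making the descended field Smale, the equivariant refinement of the Thom--Smale--Witten theorem identifies the incidence coefficient between basis elements $p$ (index $k$) and $q$ (index $k-1$) with $\sum_{g\in\Gamma}n(p,g\cdot q)\,g\in\Z[\Gamma]$, where $n(p,g\cdot q)\in\Z$ is the signed count of gradient trajectories in $M_\eta$ from $p$ to $g\cdot q$. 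Since $\mathcal C_*$ and $C_*(M_\eta)$ are both complexes of free $\Z[\Gamma]$-modules computing $H_*(M_\eta)$, they are chain homotopy equivalent over $\Z[\Gamma]$, hence $H_i\big(\Nov(\Gamma)\otimes_{\Z[\Gamma]}\mathcal C_*\big)\cong H_i(M,\widetilde{\Nov(\Gamma)})$.

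Finally I would identify $\Nov(\Gamma)\otimes_{\Z[\Gamma]}\mathcal C_*$ with the Novikov complex $(N_{\eta*},\delta_*)$. On generators the complexes agree by construction ($N_{\eta*}$ is free over $\Nov(\Gamma)$ on the zeros of $\eta$), so only the differentials must be compared. Applying $\Nov(\Gamma)\otimes_{\Z[\Gamma]}-$ turns the incidence coefficient above into a formal sum $\sum_{g\in\Gamma}n(p,g\cdot q)\,T^{\pm\chi(g)}$ whose exponents, because a trajectory from $p$ to $g\cdot q$ has drop $\tilde f(p)-\tilde f(g\cdot q)=\tilde f(p)-\tilde f(q)-\chi(g)$, are bounded above along the contributing $g$; together with injectivity of $\chi$ this forces the exponents to tend to $-\infty$, so the sum lies in $\Nov(\Gamma)$ and is exactly the matrix entry of $\delta_*$ defined by counting descending flow lines. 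Chaining the three identifications yields $H_i(N_{\eta*},\delta_*)\cong H_i\big(\Nov(\Gamma)\otimes_{\Z[\Gamma]}\mathcal C_*\big)\cong H_i(M,\widetilde{\Nov(\Gamma)})$.

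\emph{Expected main obstacle.} The delicate part is making Morse theory rigorous on the non-compact cover $M_\eta$: establishing the equivariant handle decomposition, the equivariant Thom--Smale identification of incidence numbers with flow-line counts, and above all the compactness statement that only finitely many $\Gamma$-translates contribute to a given drop, which is what guarantees Novikov-finiteness of the entries of $\delta_*$. This uses properness of $\tilde f$, the Morse--Smale transversality, and the standard compactness/gluing analysis of trajectory moduli, done $\Gamma$-equivariantly; achieving the Smale condition itself requires a $\Gamma$-invariant genericity argument. These are precisely the points that demanded the careful treatments of Latour \cite{l} and Farber \cite{f2} beyond Novikov's original sketch.
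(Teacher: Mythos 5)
The paper does not prove this statement: it explicitly attributes the theorem to Novikov and defers the proof to Latour \cite{l} and Farber \cite{f2}, pointing to \cite[Chapter 3]{f} for a complete argument. So there is no internal proof to compare against; what can be said is that your outline follows exactly the standard route of those references (equivariant Morse theory on the minimal cover $M_\eta$, a free $\Z[\Gamma]$-basis indexed by the zeros, completion to $\Nov(\Gamma)$), and your closing paragraph correctly identifies where the real work lies.

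As a proof, however, the proposal has genuine gaps beyond the ones you flag. First, the step ``$\mathcal C_*$ and $C_*(M_\eta)$ are both complexes of free $\Z[\Gamma]$-modules computing $H_*(M_\eta)$, hence chain homotopy equivalent over $\Z[\Gamma]$'' is not a valid inference: two free complexes with isomorphic homology need not be chain homotopy equivalent unless one exhibits an actual $\Z[\Gamma]$-chain map inducing the isomorphism (for non-simply-connected quotients the relevant homology is equivariant homology, and the comparison must come from the equivariant homotopy equivalence of $M_\eta$ with the CW model, not from an abstract homological argument). Second, the existence of a $\Gamma$-equivariant CW decomposition of the noncompact $M_\eta$ from the proper Morse function $\tilde f$ cannot simply be quoted from Milnor: there are infinitely many critical points, no global minimum, and the attaching order must be organized by the (unbounded) values of $\tilde f$; this is precisely why Latour and Farber work instead with finite pieces $\tilde f^{-1}([a,b])$ and pass to an inverse limit, and why the identification of the completed cellular differential with the flow-line count $\delta_*$ requires the compactness analysis you mention. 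Finally, for rank $\ge 2$ the set of $g$ with $\chi(g)$ bounded above is infinite, so ``bounded above $\Rightarrow$ tends to $-\infty$'' needs the finiteness, for each $C$, of trajectories with drop at most $C$ -- again the compactness theorem, which is the heart of the proof and is only named here. In short: right strategy, consistent with what the cited sources do, but the three steps above are asserted rather than proved, and the middle one as literally stated is false.
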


Therefore, the relevance of the Novikov complex is that, like the Morse-Smale complex, it produces a topological result by using Morse theory. The complete proof of the theorem above can be found in  \cite[Chapter 3]{f}. 
\begin{remark} We notice that if $\eta$ is a nowhere vanishing closed one-form, then the homology of $(N_{\eta*}, \delta_*)$ is 0 and by \ref{novikov}, all $b_i^{Nov}$ vanish. If moreover, $\eta$ satisfies the conditions in \ref{mail}, the Morse-Novikov cohomology with respect to $\eta$ also vanishes.
\end{remark}

The second tool we shall use in this note, in order to compute the Morse-Novikov cohomology groups of $\mathcal{S}^0$ is the following version of the Mayer-Vietoris sequence:
  
\begin{lemma} (\cite[Lemma 1.2]{hr}) Let $M$ be the union of two open sets $U$ and $V$ and $\theta$ a closed one-form. Then there exists a long exact sequence
\begin{multline}
\cdots \rightarrow H^i_\theta(M)\xrightarrow{\alpha_*}H^i_{\theta_{|U}}(U) \oplus H^i_{\theta_{|V}}(V)\xrightarrow{\beta_*}\\
\xrightarrow{\beta_*}H^i_{\theta_{|U \cap V}}(U \cap V)\xrightarrow{\delta} H^{i+1}_{\theta}(M)\rightarrow \cdots
\end{multline}
where for some  partition of unity $\{\lambda_U, \lambda_V\}$ subordinated to the covering $\{U,V\}$, the above morphisms are:
\begin{equation*}
\begin{split}
\delta ([\sigma]) &= [d\lambda_U \wedge \sigma]=-[d\lambda_V \wedge \sigma],\\
\alpha (\sigma)&= (\sigma_{|U}, \sigma_{|V}),\\
\beta(\sigma, \tau)&=\sigma_{|U \cap V}-\tau_{|U \cap V}.
\end{split}
\end{equation*}
\end{lemma}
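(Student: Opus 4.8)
The plan is to deduce the sequence from a short exact sequence of cochain complexes together with the snake lemma, exactly as in the classical (untwisted) Mayer--Vietoris argument; the only new feature is that the differential is $d_\theta = d - \theta\wedge\cdot$ rather than $d$, and this changes nothing, because $\theta\wedge\cdot$ is a $C^\infty$-linear, purely local operator.

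First I would introduce the restriction map $\alpha(\sigma) = (\sigma|_U,\sigma|_V)$ and the difference map $\beta(\sigma,\tau) = \sigma|_{U\cap V} - \tau|_{U\cap V}$, and verify that
\begin{equation*}
0 \to \bigl(\Omega^{\bullet}(M), d_\theta\bigr) \xrightarrow{\ \alpha\ } \bigl(\Omega^{\bullet}(U)\oplus\Omega^{\bullet}(V), d_\theta\bigr) \xrightarrow{\ \beta\ } \bigl(\Omega^{\bullet}(U\cap V), d_\theta\bigr) \to 0
\end{equation*}
is a short exact sequence of complexes. That $\alpha$ and $\beta$ commute with $d_\theta$ is immediate, since restriction commutes both with $d$ and with $\theta\wedge\cdot$, i.e. $(d_\theta\sigma)|_W = d_{\theta|_W}(\sigma|_W)$ for every open $W$. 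Exactness in each degree is a statement about smooth forms alone and does not see $d_\theta$: $\alpha$ is injective because $U\cup V = M$; $\beta\circ\alpha = 0$ is trivial and $\Ker\beta\subseteq\im\alpha$ holds because two forms agreeing on $U\cap V$ glue to one form on $M$; surjectivity of $\beta$ is the one place the partition of unity $\{\lambda_U,\lambda_V\}$ subordinate to $\{U,V\}$ enters — given $\sigma\in\Omega^k(U\cap V)$, the form $\lambda_V\sigma$ extends by zero to a smooth form on all of $U$ (it vanishes near $U\setminus V$, where $\lambda_V\equiv 0$), likewise $\lambda_U\sigma$ extends by zero to $V$, and $\beta(\lambda_V\sigma,\,-\lambda_U\sigma) = (\lambda_U+\lambda_V)\sigma = \sigma$.

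The snake lemma then produces the long exact cohomology sequence, which is exactly the asserted Mayer--Vietoris sequence, with $\alpha_* = H^{\bullet}(\alpha)$ and $\beta_* = H^{\bullet}(\beta)$. It remains to make the connecting map $\delta$ explicit, which I would do by running the snake lemma on the lift just constructed: for a $d_\theta$-closed $\sigma\in\Omega^i(U\cap V)$, apply $d_\theta$ slotwise to $(\lambda_V\sigma,\,-\lambda_U\sigma)$ and use $d_\theta(\lambda_V\sigma) = d\lambda_V\wedge\sigma + \lambda_V\,d_\theta\sigma = d\lambda_V\wedge\sigma$ (and likewise in the second slot) together with $d\lambda_U + d\lambda_V = 0$; the resulting cocycle lies in $\im\alpha$, being $\alpha$ of a global $(i+1)$-form which on $U\cap V$ equals $d\lambda_U\wedge\sigma = -d\lambda_V\wedge\sigma$ and vanishes elsewhere — the extension by zero being legitimate because $d\lambda_U$ (equivalently $d\lambda_V$) vanishes on a neighbourhood of $M\setminus(U\cap V)$, where $\lambda_U$ is locally constant. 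Up to the overall sign in the snake lemma's connecting homomorphism, normalised as in \cite[Lemma 1.2]{hr}, this identifies $\delta$ with the map induced by $\sigma\mapsto d\lambda_U\wedge\sigma = -d\lambda_V\wedge\sigma$, which is the stated formula.

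I do not expect a genuine obstacle: the argument is the standard Mayer--Vietoris one, carried over verbatim to the complex $\bigl(\Omega^{\bullet}(-),d_\theta\bigr)$. The only points that need a little attention — and which I would write out — are the support bookkeeping making the various extensions-by-zero smooth (for the surjectivity of $\beta$ and for the formula for $\delta$) and the sign conventions for $\beta$ and for the connecting homomorphism. In particular neither compactness of $M$ nor finite-dimensionality of the cohomology groups is used, so the sequence is valid for any union $M = U\cup V$ of open sets and any closed one-form $\theta$.
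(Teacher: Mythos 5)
Your proof is correct and is exactly the standard argument: the paper itself does not reprove this lemma but cites \cite[Lemma 1.2]{hr}, where the same derivation is given --- a short exact sequence of $d_\theta$-complexes (exactness of which is the untwisted statement, since $\theta\wedge\cdot$ is $C^\infty$-linear and local) followed by the snake lemma, with the partition of unity furnishing the lift that yields the stated connecting map. The only point to fix in writing it up is the sign normalisation of $\delta$, which you already flag.
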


\begin{remark} In the case of the Inoue surface, we shall be interested in computing the Morse-Novikov cohomology of a closed one-form of rank 1, whose group of periods $\Gamma$ is $\alpha\Z$ such that $e^{\alpha}$ is algebraic. The author in \cite{paj} gives an explicit computation, which covers this situation, as well. However, we use the Mayer-Vietoris sequence approach, since it is more direct. 
\end{remark}

\section{LCS manifolds and Morse-Novikov cohomology of the Inoue surface $\mathcal{S}^0$}

{\em Locally conformally symplectic manifolds} (shortly LCS) are smooth real (necessarily even-dimensional) manifolds endowed with a nondegenerate two form $\omega$ which satisfies the equality
\begin{equation} \label{lcs} 
d \omega = \theta \wedge \omega
\end{equation}
for some closed one form $\theta$, called the {\em Lee form}. 

Equivalently, this means there exists a non-degenerate two-form $\omega$, a covering of the manifold, $\{U_i\}_i$ and smooth functions $f_i$ on $U_i$ such that $e^{-f_i}\omega$ are symplectic, which literally explains their name. 

The equality \eqref{lcs} rewrites as $d_\theta \omega=0$, hence the problem of studying on an LCS manifold the Morse-Novikov cohomology associated to the Lee form of an LCS structure is natural.  

 Contact geometry is a source of examples of LCS manifolds. We adopt the following:
\begin{definition} Let $M$ be a manifold of odd dimension $2n+1$. Then $M$ is a {\em contact manifold} if there exists a one-form $\alpha$ such that $\alpha \wedge (d\alpha)^n$ is a volume form of $M$.
\end{definition}

\begin{definition} Let $(M, \alpha)$ be a contact manifold and $\phi: M \rightarrow M$ a diffeomorphism. We call $\phi$ a {\em contactomorphism} if $\phi^* \alpha = f \cdot \alpha$, where $f$ is a positive function on $M$. In the case $f=1$, $\phi$ is called a {\em strict contactomorphism}.
\end{definition}

\subsection{Mapping tori of contactomorphisms as LCS manifolds} We next describe a construction of compact LCS manifolds of rank 1 out of compact contact manifolds. The idea is to consider mapping tori of the latter by a contactomorphism.  

Indeed, let $(M, \al)$ be a compact contact manifold and $\phi: M \rightarrow M$ a contactomorphism. We define 
$$\bar{M}_{\phi}:=M \times [0,1]/(x, 0) \sim (\phi(x), 1).$$
We shall denote by $[m, t]$ a point of $\bar{M}_{\phi}$, which is the equivalence class of $(m, t)$ in $M \times [0,1]$. Then $\bar{M}_\phi$ has a natural structure of fiber bundle over $S^1$ with fiber $M$, given by $\pi: \bar{M}_{\phi} \rightarrow S^1$, $\pi([m, t])=e^{2\pi\mathrm{i}t}$. Here, $S^1$ is seen as the interval $[0, 1]$ with identified endpoints. 

Let $h$ be a smooth bump function, which is 0 near 0 and 1 near 1. On $M \times [0, 1]$, define the one-form $\tilde \al$ by:
$$\tilde{\alpha}:=\alpha(hf + (1-h)).$$
Then $\tilde{\alpha}$ descends to a one-form $\alpha_1$ on $\bar{M}_{\phi}$ which has the property that restricted to any fiber $M_t$ is a contact form. 

We denote by $\vartheta$ the volume form of the circle of length 1. The two-form $d\alpha_1 - \pi^* \vartheta \wedge \alpha_1$ is closed with respect to $d_{\pi^* \vartheta}$, but it is possible to have degeneracy points. Since $\bar{M}_\phi$ is compact, we may choose a large constant $K\gg 0$ such that 
$$\omega :=  d\alpha_1 - K \cdot \pi^* \vartheta \wedge \alpha_1$$ 
is nondegenerate. Moreover, we can choose $K$ in such a way that $e^K$ is not algebraic. Then $\omega$ defines an LCS form with the Lee form $\theta=K\pi^*\vartheta$. 

Clearly, the Lee form $\pi^*\vartheta$ is integral, and hence $\theta$ is a closed one-form of rank 1, with the  group of periods $K\Z$.

By the choice of $K$, the one-form $\theta$ is in the situation described by \ref{mail}, therefore the dimensions of the Morse-Novikov cohomology groups equal the Novikov Betti numbers of $\theta$. Since $\theta$ is nowhere vanishing, we obtain:
\begin{proposition}
Let $\bar M_\f$ and $\theta$ as above. Then the  Novikov Betti numbers vanish and the Morse-Novikov cohomology of $\theta$ is 0.
\end{proposition}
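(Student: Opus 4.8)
The plan is to invoke the two theorems quoted in the preliminaries essentially verbatim, so the proof is short. First I would observe that $\theta = K\pi^*\vartheta$ is by construction a nowhere vanishing closed one-form on the compact manifold $\bar M_\f$: indeed $\pi\colon\bar M_\f\to S^1$ is a submersion, so $\pi^*\vartheta$ never vanishes, and $K>0$. Next I would note that $\theta$ is of Morse type in the trivial sense — it has no zeros at all — so the Novikov complex $(N_{\theta*},\delta_*)$ associated to $\theta$ has no generators in any degree, hence is the zero complex and $H_i(N_{\theta*},\delta_*)=0$ for all $i$.

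Then I would apply Theorem \ref{novikov} to conclude $H_i(\bar M_\f,\widetilde{\Nov(\Gamma)})\cong H_i(N_{\theta*},\delta_*)=0$ for every $i$, so that all Novikov Betti numbers $b_i^{Nov}(\bar M_\f)$ vanish. This is precisely the content of the remark following Theorem \ref{novikov}, which I would simply cite.

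Finally, for the Morse-Novikov cohomology itself, I would recall that by the construction of $K$ the number $e^K$ is not algebraic, hence transcendental, so $\theta$ satisfies the hypotheses of Theorem \ref{mail} with $\Gamma=K\Z$. That theorem gives $\dim_{\R} H^i_\theta(\bar M_\f) = b_i^{Nov}(\bar M_\f) = 0$ for all $i\ge 0$, which is the claim.

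There is really no obstacle here: the statement is a direct corollary of Theorems \ref{novikov} and \ref{mail} once one has recorded that $\theta$ is nowhere vanishing and that $e^K$ was chosen transcendental. The only point that deserves a word is that "Morse type" is to be read as allowing the empty critical set; alternatively one can avoid Theorem \ref{novikov} entirely and cite directly the result of \cite{llmp} — a nowhere vanishing closed one-form that is parallel for some metric has vanishing Morse-Novikov cohomology — but that would require producing such a metric, whereas the Novikov-complex route is immediate and self-contained given what has already been set up.
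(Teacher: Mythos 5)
Your proof is correct and follows exactly the route the paper takes: $\theta=K\pi^*\vartheta$ is nowhere vanishing, so the Novikov complex is trivial and Theorem \ref{novikov} gives vanishing Novikov Betti numbers, and then Theorem \ref{mail} (applicable because $e^K$ was chosen transcendental) transfers this to the Morse--Novikov cohomology. No discrepancies to report.
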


\begin{remark} Let us modify $\theta$ with an exact form $df$ such that $\theta_1 := \theta + df$ has at least one zero. By point (3) in \ref{prop1}, we get that $H_{\theta}(\bar{M}_{\phi})=H_{\theta_1}(\bar{M}_{\phi})=0$, hence the vanishing of the Morse-Novikov cohomology may occur also for forms which have zeros. In particular, the converse of the result in \cite{llmp} mentioned in Section 1 is not true. Namely, if Morse-Novikov cohomology vanishes, the one-form with respect to which it is considered may not be parallel, since a parallel one-form has no zeros.
\end{remark}

\begin{remark} The construction above describes a large class of LCS manifolds. The particular case when $\phi$ is the identity gives the product $M \times S^1$. In the case when $\phi$ is a strict contactomorphism, there is no need of choosing the bump function $h$, since $\alpha$ defines a global form on $\bar{M}_\phi$. Then a straightforward computation shows that $d\alpha - \pi^* \vartheta \wedge \alpha$ is nondegenerate, hence the constant $K$ may be chosen to be 1. This situation is described in \cite{b} and many examples are given in \cite{bm}.
\end{remark}

\subsection{The Inoue surface $S^0$} On the Inoue surfaces of type $\mathcal{S}^0$ we consider a closed-one form of rank one, for which $e^{\alpha}$ is algebraic. We present the explicit description of the Morse-Novikov cohomology groups. 

\subsubsection{Description of the LCS manifold $S^0$.} In  \cite{in}, M. Inoue introduced three types of complex compact surfaces, which are traditionally referred to as the Inoue surfaces $\mathcal{S}^0$, $\mathcal{S}^+$ and $\mathcal{S}^-$. In \cite{t}, Tricerri endowed the Inoue surfaces $S^0$, $S^+$ and some subclasses of $S^-$ with locally conformally K\" ahler metrics, in particular, by forgetting the complex structure,  with locally conformally symplectic structures. 

We are interested in the LCS structure on $S^{0}$ and we compute its corresponding Morse-Novikov cohomology. For this purpose, we review the construction of $\mathcal{S}^0$ and insist on its description as mapping torus of the 3-dimensional torus $\mathbb{T}^3$.

Let $A$ be a matrix from $\SL_3(\mathbb{Z})$ with one real eigenvalue $\alpha >1$ and two complex eigenvalues $\beta$ and $\bar{\beta}$. We denote by $(a_1,a_2,a_3)^t$  %$\left(\begin{smallmatrix}a_1\\a_2\\a_3\end{smallmatrix}\right)$
 a real eigenvector of $\alpha$ and by $(b_1, b_2, b_3)^t$  %$\left(\begin{smallmatrix}b_1\\b_2\\b_3\end{smallmatrix}\right)$ 
 a complex eigenvector of $\beta$. Let $G$ be the group of affine transformations of $\C \times \H$ generated by the transformations:
\begin{equation*}
\begin{split}
(z, w)& \mapsto (\beta z, \alpha w),\\
(z, w)& \mapsto (z+b_i, w+a_i).
\end{split}
\end{equation*}
for all $i=1, 2, 3$, where $\H$ stands for the Poincar\'e half-plane.

As a complex manifold,  $\mathcal{S}^0$ is  $(\C \times \H)/G$. 

We now explain  its structure as a mapping torus. Denote by $\mathbb{T}^3$ the standard 3-dimensional torus, namely $\mathbb{T}^3 = \R^3/ \langle f_1, f_2, f_3 \rangle$, where 
%\begin{equation*}
%f_1\left(\begin{smallmatrix}x\\y\\z\end{smallmatrix}\right)=\left(\begin{smallmatrix}x+1\\y\\z\end{smallmatrix}\right)\text{,}\qquad %f_2\left(\begin{smallmatrix}x\\y\\z\end{smallmatrix}\right)=\left(\begin{smallmatrix}x\\y+1\\z\end{smallmatrix}\right)\qquad\text{and}\qquad %f_3\left(\begin{smallmatrix}x\\y\\z\end{smallmatrix}\right)=\left(\begin{smallmatrix}x\\y\\z+1\end{smallmatrix}\right).
%\end{equation*}
$f_1$ (resp.$f_2$, $f_3$) is the translation with $(1,0,0)$ (resp. $(0,1,0)$, $(0,0,1)$). 

Let $\bm{x}:=(x,y,z)^t$, and  consider the automorphism $\Phi$ of $\R^3$ with matrix $A^t$ in the canonical basis.
%\begin{equation*} 
%\Phi \left(\begin{smallmatrix}x\\y\\z\end{smallmatrix}\right) %= A^t %\left(\begin{smallmatrix}x\\y\\z\end{smallmatrix}\right),
%\end{equation*}
It clearly descends to an automorphism $\phi$ of $\mathbb{T}^3$, since $A^t$ belongs to $\SL_3(\Z)$. We define the manifold
$$\mathbb{T}^3 \times_{\phi}\R^+ : = (\mathbb{T}^3 \times \R^+)/(\widehat{ \bm{x}},t)\sim (\phi(\widehat{\bm{x}}),\al t)$$ %{\left(\widehat{\left(\begin{smallmatrix}x\\y\\z\end{smallmatrix}\right)}, %t\right) \sim \left(\phi %\widehat{\left(\begin{smallmatrix}x\\y\\z\end{smallmatrix}\right)}, %\alpha t\right)}%
which has the structure of a compact fiber bundle over $S^1$ by considering 
$$p : \mathbb{T}^3 \times_{\phi}\R^+ \rightarrow S^1,\qquad [(\widehat{ \bm{x}},t)]\mapsto e^{2\pi \mathrm{i} \mathrm{log}_{\alpha}t}$$
%$$p\left(\left[\widehat{\left(\begin{smallmatrix}x\\y\\z\end{smallmatrix}\right)}, t\right] \right) = e^{2\pi \mathrm{i} \mathrm{log}_{\alpha}t}.$$
Here we denote by $[ , ]$ the equivalence class with respect to $\sim$.

%There is then a diffeomorphism between $\mathbb{T}^3 \times_{\phi}\R^+$ and $\mathcal{S}^0$ given by
%$$\left[\widehat{\left(\begin{smallmatrix}x\\y\\z\end{smallmatrix}\right)}, t\right] \mapsto \left[\left[ \reallywidehat{\left(\begin{smallmatrix}
%    Re b_1 & Re_2 & Re_3 \\
%    Im b_1 & Im b_2 & Im b_3\\
%    a_1 & a_2 & a_3
%     \end{smallmatrix}\right) \cdot \left(\begin{smallmatrix}x\\y\\z\end{smallmatrix}\right)}, t\right]\right]$$
In order to write explicitly a diffeomorphism between $\mathbb{T}^3 \times_{\phi}\R^+$ and $\mathcal{S}^0$, let
$$
B:=\left(\begin{smallmatrix}
    \Re b_1 & \Re_2 & \Re_3 \\
    \Im b_1 & \Im b_2 & \Im b_3\\
    a_1 & a_2 & a_3
     \end{smallmatrix} \right)
$$
Now the requested diffeomorphism acts as:
$$[\widehat{\bm{x}},t]\mapsto [[\widehat{B\cdot\bm{x}},t]],$$
where $x+ \mathrm{i}y$ and $z+\mathrm{i}t$ are coordinates on $\C \times \H$ and $\left[\left[x+\mathrm{i}y, z+\mathrm{i}t \right]\right]$ denotes the equivalence class of $\left(x+\mathrm{i}y, z+\mathrm{i}t\right)$. It is straightforward to check this map is well defined and indeed an isomorphism. 

The LCK structure given by Tricerri in \cite{t} is given as a $G$-invariant globally conformally K\"ahler structure on $\C \times \H$ and in the coordinates $(z, w)$, the expressions for the metric and the Lee form, respectively, are:
\begin{equation*}
\begin{split}
g&=-\mathrm{i}\frac{dw \otimes d\overline{w}}{w_2^2}+w_2dz \otimes d\bar{z}\\
\theta& = \frac{dw_2}{w_2},
\end{split}
\end{equation*}
where $w_2=\Im(w)$. For our description as fiber bundle and coordinates $(x, y, z, t)$, the Lee form $\theta$ is $\frac{dt}{t}$. 

As previously, we denote by $\vartheta$ the volume form of the circle of length 1. Obviously, the de Rham cohomology class $[\vartheta]$ is in $H^{1}(S^1, \Z)$ and implicitely $[p^*\vartheta]$ belongs to $H^1(\mathcal{S}^0, \Z)$, and hence the rank is 1. Moreover, a simple computation shows that 
$$\theta=\mathrm{ln}{\alpha} \cdot p^*\vartheta.$$
 So we are not in the situation depicted by \ref{mail}, since $e^{\mathrm{ln}\alpha}=\alpha$ is an algebraic integer.

\subsubsection{Explicit computation of the Morse-Novikov cohomology.} To compute by hand the Morse-Novikov cohomology groups of $\mathcal{S}^0$ with the twisted Mayer-Vietoris sequence, we first choose the open sets $U_1$ and $U_2$ which cover the circle:
\begin{equation*}
U_1 : =\{e^{2\pi\mathrm{i}t}\mid t \in (0, 1)\}, \qquad U_2:= \{e^{2\pi\mathrm{i}t}\mid t \in (\tfrac{1}{2},\tfrac{3}{2} )\},
\end{equation*}
and take as open sets $U:= p^{-1}(U_1)$ and $V:= p^{-1}(U_2)$, representing a covering of  $\mathcal{S}^0$. The sets $U$ and $V$ are the trivializations of  $\mathcal{S}^0$ as fiber bundle over $S^1$.  Therefore, we have
\begin{equation*}
\begin{split}
\f_{U_1}&:U{\longrightarrow}U_1\times\mathbb{T}^3,\qquad [w, t] \mapsto (e^{2\pi \mathrm{i}t}, w),\, t\in(1,\al),\\
\f_{U_2}&:V{\longrightarrow}U_2\times\mathbb{T}^3,\qquad [w, t] \mapsto (e^{2\pi \mathrm{i}t}, w),\, t\in(\alpha^{\frac{1}{2}}, \alpha^{\frac{3}{2}}).
\end{split}
\end{equation*}
%$U\stackrel{\mathclap{\normalfont\mbox{$\phi_{U_1}$}}}{\stackrel{\displaystyle\sim}{\smash{\rule[1.5pt]{1em}{0.75pt}}}} U_1 \times \mathbb{T}^3$ and $V\stackrel{\mathclap{\normalfont\mbox{$\phi_{U_2}$}}}{\stackrel{\displaystyle\sim}{\smash{\rule[1.5pt]{1em}{0.75pt}}}} U_2 \times \mathbb{T}^3$, where $\phi_{U_1}([w, t]) = (e^{2\pi \mathrm{i}t}, w)$, if $t \in (1, \alpha)$ and $\phi_{U_2}([w, t]) = (e^{2\pi \mathrm{i}t}, w)$, if $t \in (\alpha^{\tfrac{1}{2}}, \alpha^{\tfrac{3}{2}})$.  
Since $U_1 \cap U_2$ is disconnected,  the transition maps $g_{U_1U_2}:=\phi_{U_1} \circ \phi^{-1}_{U_2}$ are given by:
\begin{equation*}
\begin{split}
g_{U_1U_2}&: U_1 \cap U_2 \times \mathbb{T}^3 \rightarrow U_1 \cap U_2 \times \mathbb{T}^3,\\
g_{U_1U_2}(m,\bm{x}) %\left(\begin{smallmatrix}x\\y\\z\end{smallmatrix}\right) \right)
&=\begin{cases}
(m,\widehat{\bm{x}}), & \text{if $m= e^{2\pi \mathrm{i}t}$, with $t \in (\frac{1}{2}, 1)$}\\[.5mm]
(m,\widehat{(A^t)^{-1}\cdot\bm{x}}), & \text{if $m= e^{2\pi \mathrm{i}t}$, with $t \in (1, \tfrac{3}{2})$}
\end{cases}
\end{split}
\end{equation*}

% \widehat{\left(\begin{smallmatrix}x\\y\\z\end{smallmatrix}\right)} \right), & \text{if $m= e^{2\pi \mathrm{i}t}$, with $t \in (\tfrac{1}{2}, 1)$}\\
%\left(m, \reallywidehat{(A^t)^{-1} \cdot \left(\begin{smallmatrix}x\\y\\z\end{smallmatrix}\right)} \right), & \text{if $m= e^{2\pi \mathrm{i}t}$, with $t \in (1, \tfrac{3}{2})$} 
%\end{cases}$$

As $\theta$ is not exact, we already know that $H^0_{\theta}(\mathcal{S}^0)$ and $H^4_{\theta}(\mathcal{S}^0)$ vanish (see \cite{hr}). Concerning the other Morse-Novikov cohomolgy groups, we prove the following result:
\begin{theorem}\label{inoue} On $\mathcal{S}^0$, for the Lee form $\theta$ given by Tricerri, $H^{1}_{\theta}(\mathcal{S}^0)$ vanishes, $H^{2}_{\theta}(\mathcal{S}^0)\simeq \R$ and $H^{3}_{\theta}(\mathcal{S}^0)\simeq \R$.
\end{theorem}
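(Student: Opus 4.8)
The plan is to run the twisted Mayer--Vietoris sequence of Lemma~1.2 of \cite{hr} with the covering $\{U,V\}$ introduced above, reducing the computation on $\mathcal{S}^0$ to computations on pieces that are, up to homotopy, $\mathbb{T}^3$-bundles over intervals. Since $U_1$ and $U_2$ are contractible, $U\simeq \mathbb{T}^3$ and $V\simeq \mathbb{T}^3$, and on each of these the restriction $\theta|_U$ (resp. $\theta|_V$) is exact: indeed $\theta=\mathrm{ln}\,\alpha\cdot p^*\vartheta$ and $\vartheta$ becomes exact on the interval $U_1$. Hence by point (1) of Proposition~\ref{prop1}, $H^\bullet_{\theta|_U}(U)\simeq H^\bullet_{dR}(U)\simeq H^\bullet(\mathbb{T}^3)$ and likewise for $V$. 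The intersection $U\cap V$ has two connected components, each again homotopy equivalent to $\mathbb{T}^3$ with $\theta$ exact there, so $H^\bullet_{\theta|_{U\cap V}}(U\cap V)\simeq H^\bullet(\mathbb{T}^3)\oplus H^\bullet(\mathbb{T}^3)$. Thus every term of the long exact sequence is understood, and the whole content is the identification of the connecting and restriction maps.

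The key step is to compute the map $\beta_*\colon H^i(\mathbb{T}^3)\oplus H^i(\mathbb{T}^3)\to H^i(\mathbb{T}^3)\oplus H^i(\mathbb{T}^3)$ explicitly. Tracking through the isomorphisms of Proposition~\ref{prop1}(1) on each of the four contractible pieces of $U_1$, $U_2$ and the two components of $U_1\cap U_2$, the difference of restrictions $\beta$ gets conjugated into a map of the form $(u,v)\mapsto \big(u-v,\; u-(A^t)^*v\big)$ (up to the scalar factors $\alpha^{\mathrm{something}}$ coming from $e^{-f}$, which are invertible and do not affect ranks), where $(A^t)^*$ denotes the induced automorphism on $H^i(\mathbb{T}^3)\cong \Lambda^i(\Z^3)^*\otimes\R$. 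So the cokernel and kernel of $\beta_*$ are governed by the cokernel and kernel of $\mathrm{Id}-\Lambda^i(A^t)^*$ on $H^i(\mathbb{T}^3)$. The point is that the eigenvalues of $A$ are $\alpha,\beta,\bar\beta$ with $\alpha>1$, $|\beta|^2=\alpha^{-1}$ (from $\det A=1$), and none of the products of distinct eigenvalues equals $1$ — so $\mathrm{Id}-\Lambda^i(A)$ is invertible for $i=1$ and $i=2$, while for $i=0$ and $i=3$ it is zero. This is the crucial arithmetic input: one must check that $1$ is not an eigenvalue of $\Lambda^1 A=A$ (clear, since eigenvalues are $\alpha,\beta,\bar\beta$, $\alpha\ne1$, $|\beta|<1$) nor of $\Lambda^2 A$ (eigenvalues $\alpha\beta,\alpha\bar\beta,\beta\bar\beta=\alpha^{-1}$, all $\ne1$).

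With this, the long exact sequence splits into short pieces. For $i=1,2$ the map $\beta_*$ is an isomorphism in degree $i$, which forces $H^i_\theta(\mathcal{S}^0)$ to sit between the cokernel of $\beta_*$ in degree $i-1$ and the kernel of $\beta_*$ in degree $i$. Concretely: $H^0_{\theta|_U}(U)$ etc. vanish (we are in the non-exact setting globally, but locally $\theta$ is exact, so $H^0_{\theta|_U}(U)\simeq\R$ — careful here), so I would run the sequence from the bottom: $0\to H^0_\theta(\mathcal{S}^0)\to \R\oplus\R\xrightarrow{\beta_*}\R\oplus\R\to H^1_\theta(\mathcal{S}^0)\to H^1(\mathbb{T}^3)^{\oplus2}\xrightarrow{\beta_*}H^1(\mathbb{T}^3)^{\oplus2}\to H^2_\theta(\mathcal{S}^0)\to\cdots$. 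In degree $0$, $\beta_*$ on $\R\oplus\R$ is $(u,v)\mapsto(u-v,u-v)$ (since $A$ acts trivially on $H^0$), of rank $1$; since $H^0_\theta(\mathcal{S}^0)=0$ the sequence $0\to\R^2\to\R^2\to H^1_\theta\to H^1(\mathbb{T}^3)^{2}\to H^1(\mathbb{T}^3)^{2}$ shows $H^1_\theta(\mathcal{S}^0)$ injects into $\ker(\beta_*|_{\deg 1})=0$, whence $H^1_\theta(\mathcal{S}^0)=0$. Then $H^2_\theta(\mathcal{S}^0)\cong\coker(\beta_*|_{\deg1})\oplus\ker(\beta_*|_{\deg2})=0\oplus0$? — this would be wrong, so the correct bookkeeping must produce the $\R$; the discrepancy comes precisely from the $i=3$ piece feeding back via Poincaré-type duality / the top of the sequence, i.e. from $\coker(\beta_*|_{\deg 3})\simeq\R$ which is one-dimensional since $\Lambda^3 A=\det A=1$.

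The main obstacle, and where I would spend the most care, is exactly this last bookkeeping: correctly computing $\beta_*$ in degrees $0$ and $3$ (where $A$ acts trivially, so $\beta_*$ has rank $1$ and cokernel $\R$) and feeding the resulting $\R$'s through the connecting homomorphisms $\delta$ to land in $H^3_\theta(\mathcal{S}^0)$ and $H^2_\theta(\mathcal{S}^0)$ respectively. In degrees $1$ and $2$, $\beta_*$ being an isomorphism kills any contribution, so the only surviving classes are: $H^2_\theta(\mathcal{S}^0)$ receives $\coker\delta$ from $H^1(U\cap V)$-side and a piece from $\coker(\beta_*|_{\deg 1})=0$ — so actually $H^2_\theta(\mathcal{S}^0)\cong\ker(\beta_*|_{\deg 2})=0$ must be corrected: the $\R$ in $H^2$ arises from the degree-$2$ cokernel sequence wrapping with the degree-$3$ kernel. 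I would therefore organize the computation by writing out the five-term exact pieces
\begin{equation*}
0\to\coker\big(\beta_*|_{\deg i-1}\big)\to H^i_\theta(\mathcal{S}^0)\to\ker\big(\beta_*|_{\deg i}\big)\to0
\end{equation*}
for each $i$, then compute each $\ker$ and $\coker$ via the arithmetic of $\Lambda^i A$: $\ker=\coker=0$ for $i=1,2$ and $\ker=\coker=\R$ for $i=0,3$. This gives $H^1_\theta=0$, $H^2_\theta\cong\coker(\beta_*|_{\deg1})\oplus(\text{shift of }\ker(\beta_*|_{\deg2}))$ — and the correct answer $\R$ forces one of the degree-$3$ contributions to shift down, which I would pin down by carefully choosing the partition of unity $\{\lambda_U,\lambda_V\}$ and computing $\delta([\sigma])=[d\lambda_U\wedge\sigma]$ on explicit representatives (the constant function on $U\cap V$-components, multiplied by $e^{-f}$). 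The honest bottleneck is getting the signs and the scalars $\alpha^{k}$ right in these explicit representatives so that the connecting maps have the claimed ranks; everything else is linear algebra over $\R$ driven by the eigenvalue condition on $A$.
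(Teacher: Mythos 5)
Your overall strategy is exactly the paper's: run the twisted Mayer--Vietoris sequence for the covering $\{U,V\}$, conjugate $\beta_*$ into a map between copies of $H^{\bullet}_{dR}(\mathbb{T}^3)$ via the local primitives of $\theta$, and reduce everything to linear algebra governed by the eigenvalues of $A$. The genuine gap is precisely the point you wave away: the scalar factors coming from the conjugating functions are \emph{not} harmless. The isomorphism $H^{\bullet}_{\theta|_U}(U)\simeq H^{\bullet}_{dR}(U)$ is $[\sigma]\mapsto[e^{-\ln\alpha\,p^*f}\sigma]$ and on $V$ it is $[\sigma]\mapsto[e^{-\ln\alpha\,p^*g}\sigma]$, where $f,g$ are the two angle functions. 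On the component $W_1$ of $U_1\cap U_2$ one has $g=f$, but on $W_2$ one has $g=f+1$, so the two trivializations differ there by the factor $e^{-\ln\alpha}=\alpha^{-1}$. Consequently the conjugated map is $([a],[b])\mapsto([a-b],[a-\alpha M_i b])$, where $M_i$ is the monodromy matrix on $H^i_{dR}(\mathbb{T}^3)$, and the relevant question is whether $1$ is an eigenvalue of $\alpha M_i$, not of $M_i=\Lambda^i A$. This changes the answer in every degree except $i=1$. In degrees $0$ and $3$ the map is the $2\times 2$ matrix with rows $(1,-1)$ and $(1,-\alpha)$, which is invertible --- not of rank $1$ as you claim; your own observation that rank $1$ would force $H^0_\theta\neq 0$ is the symptom of the error, and it also invalidates your degree-$1$ argument, since $H^1_\theta(\mathcal{S}^0)$ injects into $H^1(U)\oplus H^1(V)$ only because $\beta_*$ is \emph{surjective} in degree $0$. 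In degree $2$ the eigenvalues of $\alpha\Lambda^2A$ are $\alpha^2\beta$, $\alpha^2\bar\beta$ and $\alpha\cdot\beta\bar\beta=\alpha\cdot\alpha^{-1}=1$, so $I-\alpha\Lambda^2A$ drops rank by exactly one, whereas your untwisted $I-\Lambda^2A$ is invertible and yields $H^2_\theta=0$.

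With the corrected ranks the bookkeeping is completely standard: $H^i_\theta(\mathcal{S}^0)$ sits in a short exact sequence between $\coker(\beta_*|_{i-1})$ and $\ker(\beta_*|_{i})$, giving $H^1_\theta=0$, $H^2_\theta\simeq\ker(\beta_*|_{2})\simeq\R$ and $H^3_\theta\simeq\coker(\beta_*|_{2})\simeq\R$. There is no ``feedback via Poincar\'e-type duality'' or ``wrapping'' of the degree-$3$ piece into degree $2$: a long exact sequence does not wrap around, and that part of your argument is not a proof but an acknowledgement that your computed ranks are inconsistent with the answer. The fix is not a more careful choice of partition of unity but the restoration of the factor $\alpha$ in the second component of $\beta_*$.
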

\begin{proof} The proof is algebraic and the key is to explicitly write the morphism $\beta_*$. 

From now on we denote  by $W_1$ and $W_2$ the two connected components of $U_1 \cap U_2$, namely 
$$W_1=\left\{e^{2\pi \mathrm{i}t} \mid t \in (\tfrac{1}{2}, 1)\right\},\qquad  W_2=\left\{e^{2\pi \mathrm{i}t} \mid t \in  (1, \tfrac{3}{2})\right\}.$$
Consider the functions $ f : U_1 \rightarrow (0, 1)$, $ f(e^{2\pi \mathrm{i}t})=t$ and $\displaystyle g : U_2 \rightarrow (\tfrac{1}{2}, \tfrac{3}{2})$, $\displaystyle g(e^{2\pi \mathrm{i}t})=t$. Then on $U_1$, $\vartheta = df$ and on $U_2$, $\vartheta = dg$. Moreover, we observe that on $W_1$, $f$ and $g$ coincide and on $W_2$, $g=f+1$. Therefore, $\theta = \mathrm{ln} \alpha \cdot dp^*f$ on $U$ and $\theta = \mathrm{ln} \alpha \cdot dp^*g$ on $V$, hence $\theta$ is exact on these two open sets.

We have the following diagram:
\[
\begin{tikzcd}
H^0_{\theta_{|U}}(U) \oplus H^0_{\theta_{|V}}(V)\arrow{r}{\beta_*} \arrow[swap]{d}{\Phi} & H^0_{\theta_{|U \cap V}}(U \cap V) \arrow{d}{\Psi} \\
\R^2 \arrow{r}{\gamma} & \R^2
\end{tikzcd}
\]
where $\Phi$ and $\Psi$ are the isomorphisms defined as
\begin{equation*}
\begin{split}
\Phi ([\sigma], [\eta])& = (e^{-\mathrm{ln}\alpha p^*f}\sigma, e^{-\mathrm{ln}\alpha p^*g}\eta),\\
\Psi ([\omega])&= (e^{-\mathrm{ln}\alpha p^*f}\omega_{|p^{-1}(W_1)}, e^{-\mathrm{ln}\alpha p^*f}\omega_{|p^{-1}(W_2)}),
\end{split}
\end{equation*}
and $\gamma$ makes the diagram commutative, $\gamma(a, b)=(a-b, a- \alpha b)$. 

As $\alpha \neq 1$, $\gamma$ is an isomorphism, and hence  $\beta_*$ is an isomorphism, too. Consequently, the connecting morphism $\delta :  H^0_{\theta_{|U \cap V}}(U \cap V) \rightarrow H^1_{\theta}(\mathcal{S}^0)$ is injective and we can start the Mayer-Vietoris from $H^1_{\theta}(\mathcal{S}^0)$:
\begin{multline*}
0 \rightarrow H^1_{\theta}(\mathcal{S}^0) \rightarrow H^1_{\theta_{|U}}(U) \oplus H^1_{\theta_{|V}}(V) \rightarrow  H^1_{\theta_{|U \cap V}}(U \cap V)\rightarrow\\
\rightarrow \cdots  \rightarrow H^3_{\theta_{|U \cap V}}(U \cap V) \rightarrow 0
\end{multline*}

We look now at the other morphisms $\beta_*$ linking cohomology groups of degree $i \geq 1$.
\[
\begin{tikzcd}
H^i_{\theta_{|U}}(U) \oplus H^i_{\theta_{|V}}(V)\arrow{r}{\beta_*} \arrow[swap]{d}{\Phi} & H^i_{\theta_{|U \cap V}}(U \cap V) \arrow{d}{\Psi} \\
H^i_{dR}(\mathbb{T}^3) \oplus H^i_{dR}(\mathbb{T}^3) \arrow{r}{\gamma} & H^i_{dR}(\mathbb{T}^3) \oplus H^i_{dR}(\mathbb{T}^3)
\end{tikzcd}
\]
Using the fact that $\theta$ is exact when restricted to $U$ and $V$, the isomorphism $\Phi$ is obtained by the following composition of  isomorphisms:
$$H^i_{\theta_{|U}}(U) \stackrel{f_1}{\longrightarrow}H^{i}_{dR}(U)\stackrel{f_2}{\longrightarrow}
H^{i}_{dR}(U_1 \times \mathbb{T}^3)\stackrel{f_3}{\longrightarrow}H^{i}_{dR}(\mathbb{T}^3),
$$
%$$H^i_{\theta_{|U}}(U)H^{i}_{dR}(U)\stackrel{f_2}{\longrightarrow} \stackrel{\mathclap{\normalfont\mbox{$f_1$}}}{\stackrel{\displaystyle\sim}{\smash{\rule[1.5pt]{1em}{0.75pt}}}} H^{i}_{dR}(U)\stackrel{\mathclap{\normalfont\mbox{$f_2$}}}{\stackrel{\displaystyle\sim}{\smash{\rule[1.5pt]{1em}{0.75pt}}}}H^{i}_{dR}(U_1 \times \mathbb{T}^3)\stackrel{\mathclap{\normalfont\mbox{$f_3$}}}{\stackrel{\displaystyle\sim}{\smash{\rule[1.5pt]{1em}{0.75pt}}}} H^{i}_{dR}(\mathbb{T}^3),$$
where $f_1([\sigma])=[e^{-f}\sigma]$, $f_2([\eta])=[(\phi_{U_1})_{*}\eta]$, $f_3([\omega])=[i^*\omega]$ and $i:\mathbb{T}^3 \rightarrow U_1 \times \mathbb{T}^3$ is defined as $i(t) = (m, t)$, for some point $m$ in $U_1$. 

The same holds for $V$, the only difference being that $f_1':H^i_{\theta_{|V}}(V) \rightarrow  H^{i}_{dR}(V)$ is given by $[\sigma] \mapsto [e^{-g}\sigma]$ and $f_2': H^{i}_{dR}(V) \rightarrow H^{i}_{dR}(U_2 \times \mathbb{T}^3)$ is given by $[\eta] \mapsto [(\phi_{U_2})_*\eta]$. 
Thus:
$$\Phi = f_3 \circ f_2 \circ f_1 \oplus  f_3' \circ f_2' \circ f_1'.$$ 

As for $\Psi$, there is a similar sequence:
$$
H^i_{\theta_{|U \cap V}}(U \cap V)\stackrel{g_1}{\longrightarrow}H^{i}_{dR}(U \cap V)\stackrel{g_2}{\longrightarrow} H^{i}_{dR}(U \cap V \times \mathbb{T}^3)\stackrel{g_3}{\longrightarrow} H^i_{dR}(\mathbb{T}^3) \oplus H^i_{dR}(\mathbb{T}^3).
$$
%$$H^i_{\theta_{|U \cap V}}(U \cap V) \stackrel{\mathclap{\normalfont\mbox{$g_1$}}}{\stackrel{\displaystyle\sim}{\smash{\rule[1.5pt]{1em}{0.75pt}}}} H^{i}_{dR}(U \cap V) \stackrel{\mathclap{\normalfont\mbox{$g_2$}}}{\stackrel{\displaystyle\sim}{\smash{\rule[1.5pt]{1em}{0.75pt}}}} H^{i}_{dR}(U \cap V \times \mathbb{T}^3)  \stackrel{\mathclap{\normalfont\mbox{$g_3$}}}{\stackrel{\displaystyle\sim}{\smash{\rule[1.5pt]{1em}{0.75pt}}}} H^i_{dR}(\mathbb{T}^3) \oplus H^i_{dR}(\mathbb{T}^3).$$
Here, the isomorphisms $g_1$, $g_2$ and $g_3$ are given by $[\sigma] \mapsto [e^{-f}\sigma]$, $[\eta] \mapsto [(\phi_U)_*\eta]$ and $[\omega] \mapsto (i_1^*[\omega_{|W_1}], i_2^*[\omega_{|W_2}])$, where $i_1: \mathbb{T}^3 \rightarrow W_1 \times \mathbb{T}^3$ denotes the injection $t \mapsto (m, t)$ for some $m$ in $W_1$ and  $i_2: \mathbb{T}^3 \rightarrow W_2 \times \mathbb{T}^3$, $i_2(t) = (n, t)$ for some point $n$ in $W_2$. We define $\Psi = g_3 \circ g_2 \circ g_1$.  

A straightforward computation shows that $\gamma = \Phi^{-1} \circ \beta_* \circ \Psi$ is given by:
$$([a], [b]) \mapsto ([a -b], [a - \alpha \cdot i_2^*((g_{U_1U_2})_{|W_2})_*\pi^*b]),$$
where $\pi: V \times \mathbb{T}^3 \rightarrow \mathbb{T}^3$ is the projection on the second factor. 

We investigate now the map $i_2^*((g_{U_1U_2})_{|W_2})_*\pi^*: H^{i}_{dR}(\mathbb{T}^3) \rightarrow H^{i}_{dR}(\mathbb{T}^3)$ for $i=1, 2, 3$. It is an easy observation that $$i_2^*((g_{U_1U_2})_{|W_2})_*\pi^*=(\pi \circ (g_{U_1U_2})_{|W_2} \circ i_2)_*.$$

Since $\pi \circ (g_{U_1U_2})_{|W_2} \circ i_2 : \mathbb{T}^3 \rightarrow \mathbb{T}^3$ is given by the matrix $(A^t)^{-1}$, the map induced in homology, $(\pi \circ (g_{U_1U_2})_{|W_2} \circ i_2)_* : H_1(\mathbb{T}^3) \rightarrow H_1(\mathbb{T}^3)$ has the matrix $(A^{t})^{-1}$ in the canonical basis. Therefore, the matrix of the map induced by the pushforward $(\pi \circ (g_{U_1U_2})_{|W_2} \circ i_2)_* : H^1_{dR}(\mathbb{T}^3) \rightarrow H^1_{dR}(\mathbb{T}^3)$ in the canonical basis $\{[dx], [dy], [dz]\}$ is $(((A^{t})^{-1})^{t})^{-1}=A$. 

As a consequence, we obtain that the matrix of $\gamma : H^1_{dR}(\mathbb{T}^3) \oplus H^1_{dR}(\mathbb{T}^3) \rightarrow H^1_{dR}(\mathbb{T}^3) \oplus H^1_{dR}(\mathbb{T}^3)$ is the following:

$$
\left[
\begin{array}{c|c}
I_3 & -I_3 \\
\hline
I_3 & -\alpha \cdot A
\end{array}
\right]
$$

By performing a transformation which keeps the rank constant, namely adding the first three columns to the last three, we obtain that the aforementioned matrix has the same rank as:

$$
\left[
\begin{array}{c|c}
I_3 & O_3 \\
\hline
I_3 & I_3-\alpha \cdot A
\end{array}
\right]
$$

Moreover, this further implies that the rank is controlled by the block $I_{3} - \alpha \cdot A$, which would be a nonsingular matrix if and only if $\tfrac{1}{\alpha}$ were an eigenvalue of $A$, which is not the case. Hence, $\gamma$ and implicitly $\beta_*$ is an isomorphism, whence from the Mayer-Vietoris sequence, $H^{1}_{\theta}(\mathcal{S}^0)$ has to vanish. 

Since we already know the matrix of $(\pi \circ (g_{U_1U_2})_{|W_2} \circ i_2)_* : H^1_{dR}(\mathbb{T}^3) \rightarrow H^1_{dR}(\mathbb{T}^3)$ is $A$ in the basis $\{[dx], [dy], [dz]\}$, we  can easily compute the matrix of $(\pi \circ (g_{U_1U_2})_{|W_2} \circ i_2)_* : H^2_{dR}(\mathbb{T}^3) \rightarrow H^2_{dR}(\mathbb{T}^3)$ in the basis $\{[dy \wedge dz], [dz \wedge dx], [dx \wedge dz]\}$ to be $(A^*)^t$. Therefore, the matrix of $\gamma : H^2_{dR}(\mathbb{T}^3) \oplus H^2_{dR}(\mathbb{T}^3) \rightarrow H^2_{dR}(\mathbb{T}^3) \oplus H^2_{dR}(\mathbb{T}^3)$ is:

$$
\left[
\begin{array}{c|c}
I_3 & - I_3 \\
\hline
I_3 & -\alpha \cdot (A^*)^{t}
\end{array}
\right]
$$
which by the same arguments as above has the same rank as:
$$
\left[
\begin{array}{c|c}
I_3 & O_3 \\
\hline
I_3 & I_3-\alpha \cdot (A^*)^{t}
\end{array}
\right]
$$
Since $A^{*}=A^{-1} $(because $A$ lives in $\SL_3(\Z)$) and a matrix and its transpose have the same eigenvalues, $(A^*)^t$ has the same eigenvalues as $A^{-1}$, thus $\tfrac{1}{\alpha}$ is one of them. Therefore, the rank of the block $ I_3-\alpha \cdot (A^*)^{t}$ is 2, because $\tfrac{1}{\alpha}$ is an eigenvalue of $(A^*)^t$ of multiplicity 1. We infer that the matrix of $\gamma : H^2_{dR}(\mathbb{T}^3) \oplus H^2_{dR}(\mathbb{T}^3) \rightarrow H^2_{dR}(\mathbb{T}^3) \oplus H^2_{dR}(\mathbb{T}^3)$ has rank 5, forcing $\Ker\, \gamma$ to be 1-dimensional and from the Mayer-Vietoris sequence, we obtain $H^2_{\theta}(\mathcal{S}^0) \simeq \R$.

For the final case, when $i=3$, it is straightforward that $(\pi \circ (g_{U_1U_2})_{|W_2} \circ i_2)_* : H^3_{dR}(\mathbb{T}^3) \rightarrow H^3_{dR}(\mathbb{T}^3)$ is given by the multiplication with the determinant of the matrix of $(\pi \circ (g_{U_1U_2})_{|W_2} \circ i_2)_* : H^1_{dR}(\mathbb{T}^3) \rightarrow H^1_{dR}(\mathbb{T}^3)$. In this case, the determinant is 1, hence we get that $\gamma: H^3_{dR}(\mathbb{T}^3) \oplus H^3_{dR}(\mathbb{T}^3) \rightarrow H^3_{dR}(\mathbb{T}^3) \oplus H^3_{dR}(\mathbb{T}^3)$ is given by the $2 \times 2$ -matrix:
$$
\begin{bmatrix}
    1 & -1 \\
    1 & -\alpha\\
   
\end{bmatrix}
$$
and thus it defines an isomorphism. By the Mayer-Vietoris sequence, we obtain: 
$$\dim_{\R}H^3_{\theta}(\mathcal{S}^0)=6-\dim_{\R}\Im (\beta_*: H^2_{\theta_{|U}}(U) \oplus H^2_{\theta_{|V}}(V) \rightarrow H^2_{\theta_{|U \cap V}}(U \cap V))=1$$
In conclusion, $H^3_{\theta}(\mathcal{S}^0) \simeq \R$, $H^2_{\theta}(\mathcal{S}^0) \simeq \R$ and the rest of the Morse-Novikov cohomology groups vanish.
\hfill\end{proof}

\hfill

We now find generators for $H^2_{\theta}(\mathcal{S}^0)$ and $H^3_{\theta}(\mathcal{S}^0)$. 

Denote by 
$$\Omega :=-\mathrm{i}(\frac{dw \wedge d\overline{w}}{w_2^2}+w_2dz \wedge d\bar{z})$$
 the two-form on $\C \times \H$, in the coordinates $(z, w)$, which descends to an LCS form $\omega$ on $\mathcal{S}^0$. Notice that $\Omega_1:= -\mathrm{i}\frac{dw \wedge d\overline{w}}{w_2^2}$ and $\Omega_2: = -\mathrm{i}w_2dz \wedge d\bar{z}$ are two-forms which are invariant with respect to the factorization group $G$. They descend to $\mathcal{S}^0$ to two forms which we shall denote by $\omega_1$ and $\omega_2$ and we have $\omega=\omega_1+\omega_2$. Tricerri showed that $\omega$ is an LCK form and it is the fundamental two-form of the metric induced by $g=-\mathrm{i}\frac{dw \otimes d\overline{w}}{w_2^2}+w_2dz \otimes d\bar{z}$ on $\mathcal{S}^0$, which we shall denote by $g_1$. Then we have the following:

\begin{proposition}\label{gen} Let $\omega$ be the above defined LCS form of $\mathcal{S}^0$ and $\theta =\tfrac{dw_2}{w_2}$ its Lee form, as in \ref{inoue}. Then:
\begin{equation*}
\begin{split}
H^{2}_\theta(\mathcal{S}^0)& = \R [\omega]\\
H^{3}_\theta(\mathcal{S}^0)& = \R [\theta \wedge \omega].
\end{split}
\end{equation*}
\end{proposition}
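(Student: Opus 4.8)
The plan is to deduce the statement from the dimension count already carried out in \ref{inoue}, which gives $H^2_\theta(\mathcal{S}^0)\simeq\R$ and $H^3_\theta(\mathcal{S}^0)\simeq\R$. So it is enough to check two things: first, that $\omega$ and $\theta\wedge\omega$ really represent classes — for $\omega$ this is the defining LCS identity $d_\theta\omega=0$, and for the other one $d_\theta(\theta\wedge\omega)=d\theta\wedge\omega-\theta\wedge d\omega-\theta\wedge\theta\wedge\omega=-\theta\wedge\theta\wedge\omega=0$; second, that neither class vanishes. Once the non-vanishing is known, one-dimensionality forces $H^2_\theta(\mathcal{S}^0)=\R[\omega]$ and $H^3_\theta(\mathcal{S}^0)=\R[\theta\wedge\omega]$.

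To detect the non-vanishing I would exploit the pairing between the $\theta$- and $(-\theta)$-twisted complexes. The Leibniz-type identity $d(a\wedge b)=d_\theta a\wedge b+(-1)^{|a|}\,a\wedge d_{-\theta}b$ shows that when $d_\theta a=0$ and $d_{-\theta}b=0$ the form $a\wedge b$ is closed, and, $\mathcal{S}^0$ being a compact oriented $4$-manifold, the number $\int_{\mathcal{S}^0}a\wedge b$ depends only on the classes $[a]\in H^{|a|}_\theta(\mathcal{S}^0)$ and $[b]\in H^{|b|}_{-\theta}(\mathcal{S}^0)$. Thus it suffices to exhibit a $d_{-\theta}$-closed $2$-form $b_2$ with $\int_{\mathcal{S}^0}\omega\wedge b_2\ne 0$, and a $d_{-\theta}$-closed $1$-form $b_1$ with $\int_{\mathcal{S}^0}(\theta\wedge\omega)\wedge b_1\ne 0$.

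The partners come from the pieces $\omega_1,\omega_2$ of $\omega$ introduced just above the statement, together with a primitive of $\omega_1$. Writing $w=w_1+\mathrm{i}w_2$, one checks that $\mu:=\frac{dw_1}{w_2}$ is $G$-invariant, hence descends to $\mathcal{S}^0$, and that $d\mu=-\tfrac12\omega_1$ and $\theta\wedge\mu=\tfrac12\omega_1$; in particular $d_{-\theta}\mu=0$. Consequently $\omega_1=-2\,d\mu$ is $d$-exact and $\theta\wedge\omega_1=-2\,\theta\wedge d\mu=0$ (equivalently, $\omega_1$ involves only $dw_1\wedge dw_2$ while $\theta\propto dw_2$), so $d_{-\theta}\omega_1=0$ as well. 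I would then take $b_2=\omega_1$ and $b_1=\mu$. Because $\omega_1^2=\omega_2^2=0$ one has $\omega\wedge\omega_1=\omega_1\wedge\omega_2=\tfrac12\omega^2$, and $(\theta\wedge\omega)\wedge\mu=\omega\wedge(\theta\wedge\mu)=\tfrac12\,\omega\wedge\omega_1=\tfrac14\omega^2$. Since $\omega$ is non-degenerate, $\omega^2$ is a volume form on the compact manifold $\mathcal{S}^0$, so both integrals are nonzero, which is exactly what we needed.

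The only non-mechanical point is the choice of strategy: recognizing that duality with the $(-\theta)$-complex, together with the evident relation $\int_{\mathcal{S}^0}\omega^2\ne 0$, turns the problem into the elementary rank count of \ref{inoue}. Everything afterwards is routine coordinate bookkeeping — the $G$-invariance of the new form $\mu$, the two closedness identities for $\mu$, and the evaluation of the wedge products as multiples of $\omega^2$; the forms $\omega_1,\omega_2$ are already known to be globally defined on $\mathcal{S}^0$. For $H^2_\theta$ by itself one could alternatively push $[\omega]$ through the Mayer--Vietoris isomorphisms appearing in the proof of \ref{inoue} and observe that its restriction to a fibre $\mathbb{T}^3$ is a nonzero class in $H^2_{dR}(\mathbb{T}^3)$; this does not see $H^3_\theta$, however, since $\theta\wedge\omega$ restricts to zero on the fibres, so the pairing argument is the one I would actually write down.
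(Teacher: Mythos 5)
Your argument is correct, but it takes a genuinely different route from the paper's. The paper reduces to the same two tasks ($d_\theta$-closedness and non-exactness of $\omega$ and $\theta\wedge\omega$, given the dimension count of \ref{inoue}), but it detects non-exactness through the twisted Hodge theory of Guedira--Lichnerowicz: one computes $*\omega_2=\omega_1$, checks $d_\theta\omega_2=\delta_\theta\omega_2=0$, so that $\omega=\omega_1+\omega_2=d_\theta\eta+\omega_2$ is precisely the Hodge decomposition of $\omega$ with nonzero harmonic part, and for $\theta\wedge\omega$ it invokes Goto's result that $\Delta_\theta(\theta\wedge\omega)=0$. You instead use the Poincar\'e-type pairing $H^k_\theta\times H^{4-k}_{-\theta}\to\R$, $([a],[b])\mapsto\int_{\mathcal{S}^0} a\wedge b$, and exhibit explicit $d_{-\theta}$-closed partners $\omega_1$ and $\mu=dw_1/w_2$ pairing nontrivially with $\omega$ and $\theta\wedge\omega$; your identities $d\mu=-\tfrac12\omega_1$, $\theta\wedge\mu=\tfrac12\omega_1$, $\omega\wedge\omega_1=\tfrac12\omega^2$ and $(\theta\wedge\omega)\wedge\mu=\tfrac14\omega^2$ all check out against $\Omega_1=-2\,dw_1\wedge dw_2/w_2^2$, and the $G$-invariance of $\mu$ is immediate since $a_i$ and $\alpha$ are real. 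Your version is more self-contained, needing neither the twisted Hodge decomposition nor the citation to Goto, and the two arguments are secretly close: the paper's key identity $*\omega_2=\omega_1$ is exactly what makes your pairing $\int\omega\wedge\omega_1=\int\omega_2\wedge *\omega_2=\Vert\omega_2\Vert^2$ strictly positive. What the paper's route buys in exchange is the extra information that $\omega_2$ and $\theta\wedge\omega$ are the harmonic representatives of their classes.
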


Before proving these equalities, we define the notion of {\em twisted laplacian}. Namely, by extending the metric $g_1$ to the space of $k$-forms $\Omega^k(\mathcal{S}^0)$, we consider the Hodge star operator $*: \Omega^k(\mathcal{S}^0) \rightarrow \Omega^{4-k}(\mathcal{S}^0)$, given by $u \wedge * v=g_1(u, v) d\vol$. Note that the real dimension of $\mathcal{S}^0$ is 4.
Then the following operators depending on $\theta$ can be defined (they indeed  make sense on any manifold $M$ endowed with a closed one-form $\theta$, although we shall treat specifically the case of $\mathcal{S}^0$):

\begin{equation*}
\begin{split}
\delta_\theta:&\, \Omega^{k+1}(\mathcal{S}^0) \rightarrow \Omega^{k}(\mathcal{S}^0), \qquad 
\delta_\theta =- *d_{-\theta} *\\
\Delta_\theta:&\, \Omega^k(\mathcal{S}^0) \rightarrow \Omega^{k}(\mathcal{S}^0),\qquad
\Delta_\theta = \delta_\theta d_\theta + d_\theta \delta_\theta
\end{split}
\end{equation*}

\begin{remark} $\delta_\theta$ is the adjoint of $d_\theta$ with respect to the inner product on $ \Omega^k(\mathcal{S}^0)$ given by $\langle \eta, \phi \rangle =\int_{\mathcal{S}^0}\eta \wedge * \phi$.  Observe that $\delta_\theta$ and $\Delta_\theta$ are perturbations of the usual operators codifferential and laplacian, which are recovered by replacing $\theta$ with 0. The motivation for introducing the operators twisted with $\theta$ is to develop Hodge theory in the context of working with $d_\theta$ instead of $d$. They were first considered in \cite{va1} in the context of locally conformally  K\"ahler manifolds and later in \cite{gl} in the LCS setting.
\end{remark}

The following analogue of Hodge decomposition holds:
\begin{theorem} {\rm ( \cite{gl})} Let $M$ be a compact manifold, $\theta$ a closed one-form, $\delta_\theta$ and $\Delta_\theta$ defined as above. Then we have an orthogonal decomposition:
\begin{equation}\label{hodge}
\Omega^{k}(M)=\mathcal{H}^k_\theta(M) \oplus d_\theta \Omega^{k-1}(M) \oplus \delta_\theta\Omega^{k+1}(M)
\end{equation}
where $\mathcal{H}^k_\theta(M)=\{\eta \in  \Omega^{k}(M) \mid \Delta_\theta \eta=0\}$.  Moreover,
$$H^{k}_\theta(M) \simeq \mathcal{H}^k_\theta(M).$$
\end{theorem}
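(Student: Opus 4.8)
The plan is to deduce this from the general Hodge theory of elliptic complexes, the only input particular to our situation being that $d_\theta$ differs from the de Rham differential $d$ by the zeroth-order term $-\theta \wedge \cdot$. First I would observe that the principal symbol of $d_\theta$ therefore coincides with that of $d$, so $(\Omega^\bullet(M), d_\theta)$ is an elliptic complex. Since $\delta_\theta = -*d_{-\theta}*$ is, by the remark above, the formal $L^2$-adjoint of $d_\theta$ with respect to $\langle \eta, \phi\rangle = \int_M \eta \wedge *\phi$, its symbol agrees with that of the usual codifferential, and hence the twisted Laplacian $\Delta_\theta = \delta_\theta d_\theta + d_\theta \delta_\theta$ has the same principal symbol as the Hodge--de Rham Laplacian. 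In particular $\Delta_\theta$ is a second-order, formally self-adjoint elliptic operator on the compact manifold $M$.

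Granting ellipticity, I would invoke the standard elliptic package: $\mathcal{H}^k_\theta(M) = \Ker \Delta_\theta$ is finite-dimensional and one has the orthogonal $L^2$-decomposition $\Omega^k(M) = \mathcal{H}^k_\theta(M) \oplus \im \Delta_\theta$, with $\im \Delta_\theta$ closed. The next step is the identity $\langle \Delta_\theta \eta, \eta\rangle = \norm{d_\theta \eta}^2 + \norm{\delta_\theta \eta}^2$, which shows that $\eta$ is harmonic if and only if $d_\theta \eta = 0$ and $\delta_\theta \eta = 0$ simultaneously. Writing any $\eta$ as $\eta = h + \Delta_\theta \xi = h + d_\theta(\delta_\theta \xi) + \delta_\theta(d_\theta \xi)$ then produces the desired three-term decomposition \eqref{hodge}, and I would check the mutual orthogonality of the summands directly: $\mathcal{H}^k_\theta \perp d_\theta \Omega^{k-1}$ and $\mathcal{H}^k_\theta \perp \delta_\theta \Omega^{k+1}$ follow from $\delta_\theta h = 0$ and $d_\theta h = 0$ for harmonic $h$ by moving the operator across the inner product, while $d_\theta \Omega^{k-1} \perp \delta_\theta \Omega^{k+1}$ follows from $d_\theta^2 = 0$, since $\langle d_\theta \alpha, \delta_\theta \gamma\rangle = \langle d_\theta^2 \alpha, \gamma\rangle = 0$.

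For the isomorphism $H^k_\theta(M) \simeq \mathcal{H}^k_\theta(M)$, I would show that each $d_\theta$-cohomology class contains a unique harmonic representative. Given a $d_\theta$-closed $\eta$, decompose it as $\eta = h + d_\theta \alpha + \delta_\theta \gamma$; applying $d_\theta$ and using $d_\theta \eta = d_\theta h = 0$ gives $d_\theta \delta_\theta \gamma = 0$, whence $\norm{\delta_\theta \gamma}^2 = \langle d_\theta \delta_\theta \gamma, \gamma\rangle = 0$ forces $\delta_\theta \gamma = 0$, so $\eta = h + d_\theta \alpha$ and $[\eta] = [h]$. Uniqueness follows because a harmonic form that is $d_\theta$-exact, say $h = d_\theta \alpha$, satisfies $\norm{h}^2 = \langle h, d_\theta \alpha\rangle = \langle \delta_\theta h, \alpha\rangle = 0$. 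This yields the stated isomorphism.

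The hard part will be the analytic foundation, namely the ellipticity of $\Delta_\theta$ together with the consequent finite-dimensionality of $\mathcal{H}^k_\theta(M)$, the closedness of $\im \Delta_\theta$, and the $L^2$-orthogonal splitting $\Omega^k = \Ker \Delta_\theta \oplus \im \Delta_\theta$. However, since $d_\theta$ and $\delta_\theta$ share their principal symbols with $d$ and the ordinary codifferential, this entire analytic apparatus is identical to the classical Hodge--de Rham case and may be imported verbatim from the standard references; the remaining algebraic manipulations above are then routine.
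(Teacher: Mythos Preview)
The paper does not actually prove this theorem: it is stated with an attribution to \cite{gl} (Guedira--Lichnerowicz) and used as a black box, so there is no ``paper's own proof'' to compare against. Your argument is correct and is the standard route to such a twisted Hodge decomposition: the key observation that $d_\theta$ is a zeroth-order perturbation of $d$ gives the ellipticity of $\Delta_\theta$, after which the algebraic manipulations you carry out are the usual ones. Nothing further is needed.
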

Thus, we observe that important properties of the Hodge-de-Rham theory for the operator $d$ are shared by the same theory applied to $d_\theta$. 

We  now give the\\[1mm]
\noindent{\em Proof of \ref{gen}.} 
Since we proved in \ref{inoue} that $H^{2}_\theta$ and $H^{3}_\theta$ are isomorphic to $\R$, it is enough to show that $\omega$ and $\theta \wedge \omega$ are $d_{\theta}$-closed, but not $d_{\theta}$-exact.  

We shall prove that with respect to the Hodge decomposition \eqref{hodge}, $\omega$ has the harmonic and the $d_{\theta}$-exact parts non-vanishing. Indeed, a straightforward computation shows that $\Omega_1=d_{\tfrac{dw_2}{w_2}}\tfrac{-dw_1}{w_2}$. Since $\tfrac{-dw_1}{w_2}$ is $G$-invariant and descends to a one-form $\eta$ on $\mathcal{S}^0$, we have $w_1 = d_{\theta}\eta$. As $\omega$ is the fundamental two-form of the metric $g_1$, which is hermitian with respect to the complex structure of $\mathcal{S}^0$ induced form the standard one on $\C \times \H$, an easy linear algebra computation (see \cite[p. 31]{gh}) shows that the Riemannian volume form $d\vol$ equals $\tfrac{\omega^2}{2!}$. In the general case of complex dimension $n$, the volume form $d\vol$ is $\tfrac{\omega^n}{n!}$. This further implies that $*\omega_2 = \omega_1$. Consequently, $d_{-\theta} * \omega_2 = d_{-\theta}\omega_1 =d{\omega_1}+\theta \wedge \omega_1$. However, $d\Omega_1=0$ and $\tfrac{dw_2}{w_2} \wedge \Omega_1=0$,  hence $d\omega_1=0$ and $\theta \wedge \omega_1=0$, implying that $\omega_2$ is $\delta_\theta$-closed. Still, one can show that $\Omega_2$ is $d_{\tfrac{dw_2}{w_2}}$-closed, therefore $\omega_2$ also is $d_{\theta}$-closed. So $\omega_2$ is harmonic with respect to $\Delta_\theta$. Thus, $\omega=\omega_1+\omega_2$ is the Hodge decomposition of $\omega$. We proved in this way that $\omega$ is not $d_\theta$-exact and moreover, $[\omega]=[\omega_2]$ defines a non-vanishing cohomology class in $H^{2}_\theta(M)$. But $H^{2}_\theta(M) \simeq \R$, therefore $H^{2}_\theta(M)= \R[\omega]=\R[\omega_2]$.

As for $H^{3}_\theta(M)$, we first notice that $\theta \wedge \omega$ is $d_\theta$-closed. Indeed, $d_{\theta} (\theta \wedge \omega)=d(d\omega)-\theta \wedge \theta \wedge \omega =0$. In \cite{g}, it was shown that $\Delta_\theta (\theta \wedge \omega) =0$, whence we obtain, as in the case of $\omega$, that $\theta \wedge \omega$ is not $d_\theta$-exact. This means that we found a generator for $H^{3}_\theta(M)$, namely $H^{3}_\theta(M)=\R[\theta \wedge \omega]$. \hfill\blacksquare

\begin{remark} We notice that the alternate sum of the dimensions of the Morse-Novikov cohomology $H^{i}_\theta(\mathcal{S}^0)$ groups is 0, which equals indeed the Euler characteristic of $\mathcal{S}^0$. 
\end{remark}

\noindent{\bf Acknowledgements:} I am very grateful to Alexandru Oancea for the original suggestion of the theme and to Liviu Ornea for his encouragement and valuable ideas and suggestions that improved this paper. Many thanks to Miron Stanciu are due for very enlightening discussions. I also thank Andrei Pajitnov for drawing my attention to the results in the paper \cite{paj}.

\end{document}